\newcommand{\Z}{{\mathbb Z}}
\newcommand{\R}{{\mathbb R}}
\newcommand{\C}{{\mathbb C}}
\newcommand{\T}{{\mathbb T}}
\newtheorem{lemma}{Lemma}[section]
\newtheorem{theorem}[lemma]{Theorem}
\newtheorem{remark}[lemma]{Remark}
\newtheorem{proposition}[lemma]{Proposition}
\newtheorem{definition}[lemma]{Definition}
\newcommand{\be}{\begin{equation}}
\newcommand{\ee}{\end{equation}}
\newcommand{\ti}{\tilde}
\newcommand{\spr}[2]{\left\langle #1 , #2 \right\rangle}
\newcommand{\E}{\mathrm{e}}
\newcommand{\I}{\mathrm{i}}
\newcommand{\tr}{\mathrm{tr}}
\newcommand{\im}{\mathrm{Im}}
\newcommand{\eps}{\varepsilon}
\numberwithin{equation}{section}
\begin{document}

\title[Limit-periodic Schr\"odinger operators]{Absolutely continuous spectrum for limit-periodic Schr\"odinger operators}

\author[H.\ Kr\"uger]{Helge Kr\"uger}
\address{Mathematics 253-37, Caltech, Pasadena, CA 91125}
\email{\href{helge@caltech.edu}{helge@caltech.edu}}
\urladdr{\href{http://www.its.caltech.edu/~helge/}{http://www.its.caltech.edu/~helge/}}

\thanks{H.\ K.\ was supported by the Simons Foundation.}

\date{\today}

\keywords{absolutely continuous spectrum,
 limit-periodic Schr\"odinger operators}

\begin{abstract}
 We show that a large class of limit-periodic Schr\"odinger operators has
 purely absolutely continuous spectrum in arbitrary dimensions. This result was previously
 known only in dimension one.

 The proof proceeds through the non-perturbative construction of limit-periodic
 extended states. An essential step is a new estimate of the probability (in
 quasi-momentum) that the Floquet Bloch operators have only simple
 eigenvalues.
\end{abstract}

\maketitle

%
%
%

\section{Introduction}

In this paper, we consider Schr\"odinger operators $\Delta + V$
acting on the lattice $\ell^2(\Z^d)$ for $d\geq 1$. Here $\Delta$
is the discrete Laplacian
\be
 \Delta\psi(n)=\sum_{|e|_1=1} \psi(n+e),\quad |x|_1=|x_1|+\dots+|x_d|
\ee
and the potential $V$ is a multiplication operator by a sequence
$V:\Z^d\to\R$. For some general background, see Sections 3 and 4
in \cite{kirsch}. The potential
$V$ is called $p=(p_1, \dots, p_d)$-periodic if
\be
 V(n_1 + p_1, \dots, n_d) = \dots = V(n_1, \dots, n_d + p_d)
 = V(n_1, \dots, n_d)
\ee
for all $n\in\Z^d$. A sequence of periods $p^1, p^2, \dots$
is called increasing if $p^{\ell}_j$ divides $p^{\ell + 1}_j$
for all $\ell\geq 1$ and $j =1,\dots, d$. $V$ is limit-periodic
if there exists an increasing sequence of periods $p^j$ and
$p^j$-periodic potentials $V^j$ such that
\be
 V_j = V^1 + \dots V^j
\ee
converges to $V$ in $\ell^{\infty}(\Z^d)$.

 The main result is

\begin{theorem}\label{thm:ac}
 Let $d\geq 1$, $\eps_1 > 0$, and $p^j$ be an increasing sequence of periods.
 Then there exists a sequence $\eps_j > 0$, $j\geq 2$, such that for
 $V^j$ a $p^j$-periodic potential satisfying 
 $\|V^j\|_{\ell^{\infty}(\Z^d)}\leq\eps_j$, the potential
 \be
  V = \lim_{j\to\infty} (V^1 + \dots V^j)
 \ee
 exists in $\ell^{\infty}(\Z^d)$ and the Schr\"odinger
 operator $\Delta + V$ has purely absolutely continuous
 spectrum.
\end{theorem}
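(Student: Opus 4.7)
The plan is to construct generalized (Bloch-type) eigenfunctions for $\Delta + V$ as norm limits of Bloch eigenfunctions of the periodic approximants $H_j := \Delta + V_j$, where $V_j = V^1 + \cdots + V^j$ is $p^j$-periodic. Each $H_j$ admits a Floquet--Bloch decomposition into fibers $H_j(k)$, with $k$ ranging over a Brillouin zone $\T^d_j$, and each $H_j(k)$ a self-adjoint matrix of dimension $|p^j| = p^j_1 \cdots p^j_d$. Passing from $H_j$ to $H_{j+1}$ involves refinement of the Brillouin zone (a ``folding'' that stacks the bands of $H_j$ as a multi-sheeted object) followed by the small perturbation $V^{j+1}$. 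For $V^{j+1}$ small enough relative to the band gaps at level $j$, analytic perturbation theory preserves individual eigenpairs and yields a continuation of each eigenfunction $\phi^j_{k,\alpha}$ to a nearby eigenfunction $\phi^{j+1}_{k,\alpha}$.

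The inductive continuation succeeds only where the fiber operators $H_{j+1}(k)$ have simple, well-separated eigenvalues. I would therefore organize the argument as follows. First, set up the Floquet--Bloch transforms $U_j$ and record uniform bounds for analytic perturbation of simple eigenvalues in terms of their gaps. Second, prove the crucial quantitative estimate: for every admissible $V^{j+1}$ with $\|V^{j+1}\|_{\ell^\infty}\leq \eps_{j+1}$, the set of $k\in\T^d_{j+1}$ for which $H_{j+1}(k)$ has two eigenvalues within $\eta_{j+1}$ of each other has measure at most $\delta_{j+1}$, with $\eta_{j+1}$ and $\delta_{j+1}$ chosen so that $\sum_j \delta_j < \infty$ and the perturbation bounds telescope. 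Third, apply Borel--Cantelli to the sequence of bad sets: for almost every $k$ only finitely many levels are bad, and beyond the last bad level the eigenfunctions $\phi^j_{k,\alpha}$ converge to limit-periodic extended states $\phi_{k,\alpha}$ solving $(\Delta + V)\phi_{k,\alpha} = E(k,\alpha)\phi_{k,\alpha}$. Fourth, conclude that the $U_j$ converge to a unitary diagonalizing $\Delta+V$ as a direct integral with continuous, non-locally-constant band functions, whence the spectrum is purely absolutely continuous.

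The main obstacle is producing the quantitative estimate in the second step. In $d=1$, Sturm--oscillation theory forces simple Bloch eigenvalues away from isolated $k$, which is why the theorem was already known there. In $d\geq 2$, eigenvalue crossings are codimension two in parameter space while the quasi-momentum varies over the $d$-dimensional torus, so naive dimension counting is no longer sufficient; the worry is that crossings created by the refolding could persist on positive-measure sets under small perturbations. A measure bound that is uniform over all admissible $V^{j+1}$, rather than merely generic, is required, and producing it is the principal new technical input. Once this estimate is in hand, the perturbative continuation and completeness of the Bloch basis are essentially routine, and the absolute continuity follows from the direct-integral representation of the limit operator.
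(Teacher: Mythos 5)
Your steps (1)--(3) do track the paper's strategy: an inductive Floquet--Bloch continuation on good sets of quasi-momenta, driven by a quantitative estimate on the measure of quasi-momenta where the fiber operators fail to have well-separated eigenvalues (in the paper this is the simplicity theorem, proved by a Cartan-type estimate applied to the discriminant $f(x)=\prod_{j<\ell}(E_j(x)-E_\ell(x))^2$, which is analytic in the complexified quasi-momentum and bounded below at a suitable purely imaginary point), followed by intersecting the good sets so that the eigenpairs converge. You correctly identify that estimate as the principal new input, though you give no indication of how to obtain it.

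The genuine gap is your step (4), which you declare ``essentially routine.'' The limit operator $\Delta+V$ is not periodic, so there is no Floquet direct-integral representation for it, and the fiber eigenfunctions only converge in $\ell^1/\ell^\infty$ of the fibers on sets of quasi-momenta of measure tending to $1$; this does not produce a unitary diagonalization of the limit. More fundamentally, the existence of bounded (even limit-periodic) generalized eigenfunctions for almost every quasi-momentum does not by itself exclude singular or pure point spectrum, and ``continuous, non-locally-constant band functions'' is far too weak: absolute continuity is not preserved under the limit $k\to\infty$ of the spectral measures of the approximants unless one has bounds on their densities that are \emph{uniform in $k$}. The paper supplies exactly the missing quantitative ingredients: a lower bound $|\partial_{x_d}E(x,\ell)|\geq\gamma$ on the good sets (the sharpened simplicity theorem, again via Cartan applied to $\mathrm{Res}(P,\partial_{x_d}P)$), an upper bound on the number of $x_d$ with $E\in\sigma(\widehat H_{(x',x_d)})$, and $\ell^1$ bounds on the fiber eigenfunctions. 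These yield, for the projections $P_{k,j}$ onto the good pieces, a uniform bound $\|d\mu_{k,j}/dE\|_\infty\leq C_j$ for the spectral measures of $H^k$ at $P_{k,j}\varphi$; only then does $k\to\infty$ give absolutely continuous measures $\mu_j$, and the decomposition $\mu=\mu_1+\sum_{j\geq2}(\mu_j-\mu_{j-1})$ into positive absolutely continuous pieces, for $\varphi$ in a dense set, gives purely absolutely continuous spectrum. Without an argument of this type (a uniform transversality/density estimate surviving the limit), your final step fails, and you would also need the stability of the derivative bound under the perturbations $V^{j+1}$, which is what forces the additional smallness condition $\gamma_j\gg\eps_{j+1}/\delta_{j+1}$ in the paper.
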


This statement was originally proven by Avron and Simon \cite{as1}
for Schr\"odinger operators on $L^2(\R)$. Damanik and Gan \cite{dg1}
gave a proof for the case of $\ell^2(\Z)$. As far as higher
dimensional Schr\"odinger operators are concerned, Karpeshina
and Lee \cite{kl1} have shown the existence of an
absolutely continuous component of the spectrum in the perturbative regime
of high energies on $L^2(\R^2)$. So the results are new for
$d\geq 2$. Furthermore, the proof given here is much simpler
than the strategy of Karpeshina and Lee.

In difference to Karpeshina and Lee, we do not discuss the
spectrum of $H$ as a set. The main reason is that our results allow
for the spectrum to contain many gaps, just start with a large
enough $V_1$. Finally, limit-periodic Schr\"odinger operators
with pure-point spectrum have been constructed by Damanik and
Gan in \cite{dg2} in arbitrary dimension. Finally, the results
of \cite{b2007} and Chapter~17 in \cite{bbook} imply the
existence of extended states for quasi-periodic Schr\"odinger
operators in arbitrary dimension and small coupling for
a set of frequencies of large measure.

The proof proceeds by constructing generalized eigenfunctions,
that is solutions $u:\Z^d\to\C$ of $H u = E u$. We will show

\begin{theorem}\label{thm:genev}
 Let $V$ be as in Theorem~\ref{thm:ac}. Then
 for almost every $\theta_1,\dots,\theta_d\in\R$, there exists
 $E\in\R$ and non-zero limit-periodic $u:\Z^d\to\C$ such that
 \be
  H u = Eu
 \ee
 and
 \be
  \hat{u}(\theta_1, \dots, \theta_n) = \lim_{R\to\infty}
   \frac{1}{\#\Lambda_R(0)} \sum_{n\in\Lambda_R(0)}
    u(n) e(n_1 \theta_1 + \dots + n_d \theta_d)
     \neq 0.
 \ee
\end{theorem}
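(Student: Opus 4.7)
The plan is to build the generalized eigenfunction by induction on the periodic approximants $H_j = \Delta + V_j$, using Floquet-Bloch theory and analytic perturbation theory. For each stage $j \geq 0$, the Bloch ansatz $u(n) = e(n \cdot \theta) v(n)$ with $v$ being $p^j$-periodic turns $(\Delta + V_j) u = E u$ into a finite-dimensional eigenvalue problem $H_j(\theta) v = E v$, where $H_j(\theta)$ acts self-adjointly on $\ell^2(\Z^d / p^j \Z^d)$. Starting from the free case $v_0 \equiv 1$ with eigenvalue $E_0(\theta) = 2 \sum_k \cos(2\pi \theta_k)$, the idea is to select at each stage an eigenpair $(E_j(\theta), v_j(\cdot\,;\theta))$ of $H_j(\theta)$ that continues the previous selection, and to take limits.

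For the inductive step from $j$ to $j+1$, one views $v_j$ as $p^{j+1}$-periodic so that it becomes an eigenvector of the refined Floquet operator $H_j(\theta)$ on $\ell^2(\Z^d / p^{j+1} \Z^d)$ with unchanged eigenvalue $E_j(\theta)$. If $E_j(\theta)$ is separated from the rest of the spectrum of this refined operator by a gap $g_j(\theta)$ that is large compared to $\eps_{j+1}$, analytic perturbation theory applied to $H_{j+1}(\theta) = H_j(\theta) + V^{j+1}$ produces a unique nearby eigenpair with
\[
 |E_{j+1}(\theta) - E_j(\theta)| \leq \eps_{j+1}, \qquad
 \|v_{j+1} - v_j\|_\infty \lesssim \eps_{j+1}/g_j(\theta).
\]
Provided the gap condition holds for all but finitely many $j$ at the chosen $\theta$, the $v_j$ converge uniformly to a limit-periodic $v$, the $E_j(\theta)$ converge to some $E$, and $u(n) = e(n \cdot \theta) v(n)$ satisfies $(\Delta + V) u = E u$.

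The hard part, and the essential step advertised in the abstract, is to show that the gap condition holds for a.e.\ $\theta$. Refinement from $p^j$- to $p^{j+1}$-periodicity assembles the eigenvalues of $H_j$ at $\prod_k (p^{j+1}_k / p^j_k)$ different quasi-momenta into the single refined Floquet operator, so near-coincidences between these folded bands are the dominant source of small gaps, independent of $V^{j+1}$. A sufficient input would be a bound of the form
\[
 \bigl|\bigl\{\theta \in [0,1)^d : H_j(\theta)\ \text{has two eigenvalues within}\ \delta\bigr\}\bigr|
  \leq C_j \delta^{\alpha}
\]
with $\alpha > 0$ and some controllable $C_j$: choosing $\eps_{j+1} \ll \delta_j$ with $C_j \delta_j^\alpha$ summable, Borel-Cantelli then yields a full-measure set of $\theta$ at which the induction eventually proceeds without obstruction. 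Finally, since $v_0 \equiv 1$ and $\sum_j \|v_{j+1} - v_j\|_\infty$ can be made less than $1/2$ by shrinking $\eps_1$, the mean of $v$ (which exists by limit-periodicity) stays bounded away from $0$; this mean equals the relevant Fourier mode $\hat u(\pm \theta)$, giving the non-vanishing conclusion.
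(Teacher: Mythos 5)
Your outline reproduces the paper's architecture (Floquet--Bloch reduction, inductive tracking of an eigenpair of the refined Floquet operators with $\eps_{j+1}$ small relative to a gap scale, passage to a limit-periodic generalized eigenfunction), but the step you label ``the hard part'' is exactly the paper's main technical content, and you assume it rather than prove it. The needed input is the measure estimate on quasi-momenta where the folded Floquet operator has two eigenvalues within $\delta$; in the paper this is Theorem~\ref{thm:simple}, proved via a Cartan-type estimate (Theorem~\ref{thm:caresti}) applied to the discriminant $f(x)=\prod_{j<\ell}(E_j(x)-E_\ell(x))^2$, with the required nondegeneracy obtained at a complex quasimomentum where the diagonal part of $\widehat H_z$ dominates. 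Note also that the quantitative shape you postulate, a bound $C_j\delta^{\alpha}$ with a fixed exponent $\alpha>0$, is not what this method gives: the relation is $\delta=\eta^{C P_j^2\log P_j}$, i.e.\ the exponent degenerates rapidly with the period. That is still sufficient (choose $\eta_j$ summable and then $\eps_{j+1}=\delta_{j+1}^{10}$), but with the lemma left unproved your induction has no foundation, so the proposal is an outline whose central lemma is missing.

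The non-vanishing argument also fails as written, for two reasons. First, you continue the constant vector $v_0\equiv 1$ from the free operator and want $\sum_j\|v_{j+1}-v_j\|_\infty<1/2$ ``by shrinking $\eps_1$''; but in Theorem~\ref{thm:ac} the number $\eps_1$ is given and arbitrary (the paper stresses that $V_1$ may be large enough to open many gaps), so $v_0$ need not be close to any eigenvector of $H_1(\theta)$ and the first step of your chain is uncontrolled. Second, your Borel--Cantelli argument only yields, for a.e.\ $\theta$, that the gap condition holds for all $j\geq J(\theta)$; at a stage where it fails, the eigenvector can rotate arbitrarily inside the near-degenerate subspace, so the branch started at $j=0$ cannot be continued with small increments, and the set of $\theta$ that are good at \emph{every} stage has measure $1-\sum_j\eta_j<1$, not full measure. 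Thus your scheme proves the conclusion only on a set of large but not full measure, and only for small $V^1$. The repair is the one the paper uses: for $\theta=x+k$ with $(x,\ell)$ in the good set $\mathcal{G}_j$ (whose union over $j$ has full measure), start the tracking at the $\theta$-dependent stage $j$, and choose the band index $\ell$ so that the prescribed mode is populated, i.e.\ $|\psi^j(x,\ell;k)|\geq P_j^{-1/2}$, which is possible because $\{\psi^j(x,\ell)\}_\ell$ is an orthonormal basis of $\ell^2(\mathbb{B}_{p^j})$; since the subsequent $\ell^1$ drift is of order $\delta_j^{8}$, far smaller than $P_j^{-1/2}$, the Fourier coefficient of the limit at $\theta$ survives. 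With that modification, plus a proof (or citation) of the simplicity estimate, your argument would match the paper's.
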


Here, we use the notation $e(x) = \E^{2\pi\I x}$
and $\Lambda_R(n)=\{x\in\Z^d:\quad |n-x|_{\infty}\leq R\}$.
We will now discuss properties single periodic operator
following \cite{kper}.
Given a period $p \in (\Z_+)^d$, we introduce the set
\be
 \mathbb{B}_{p} = \left\{(\frac{k_1}{p_1},\ \dots,\ \frac{k_d}{p_d}),
  \quad {0\leq k_j\leq p_j-1} \right\}.
\ee
Any $p$-periodic function $V$ can be written as
\be
 V(n) = \sum_{k\in\mathbb{B}_{p}} \widehat{V}(k) 
  e(k \cdot n),
\ee
where $x\cdot y = \sum_{j=1}^{d} x_j y_j$. For $u\in\ell^1(\Z^d)$,
we define the Fourier transform $\hat{u}:\T^d\to\R$, $\T=\R/\Z$ by
\be
 \hat{u}(x) = \sum_{n\in\Z^d} u(n) e(x\cdot n).
\ee
This map is extended to $\ell^2(\Z^d)\to\ell^{2}(\Z^d)$ as usual.
Furthermore, the Fourier transform of $(\Delta + V) u$ is
given by
\be
 \sum_{j=1}^{d} 2\cos(2\pi x_j) \hat{u}(x)
  + \sum_{k\in\mathbb{B}_{p}} \widehat{V}(k) \hat{u}(x + k).
\ee
Letting $\psi_x = \{\hat{u}(x+k)\}_{k\in\mathbb{B}_{p}}$,
we see that the action of this operator is equivalent to
\be
 \widehat{H}_{x} \psi(k) = \sum_{j=1}^{d} 2\cos(2\pi (x_j+k_j)) \psi(k)
  + \sum_{\ell\in\mathbb{B}_{p}} \widehat{V}(\ell) \psi(k+\ell). 
\ee
The operator $\widehat{H}_{x}$ acts on the $P = p_1\cdots p_d$
dimensional space $\ell^2(\mathbb{B}_{p})$, and we can uniquely define its eigenvalues
by
\be
 E(x,1) \leq E(x,2) \leq \dots \leq E(x,P).
\ee

\begin{definition}
 Let $\delta > 0$. The spectrum of $\widehat{H}_{x}$ is called
 $\delta$-simple if for every $1\leq\ell\leq P-1$, we have
 \be
  E(x,\ell+1)-E(x,\ell)\geq\delta.
 \ee
 The spectrum of $\widehat{H}_x$ is called simple if it is $\delta$-simple
 for some $\delta > 0$.
\end{definition}

For the $x$ such that the spectrum of $\widehat{H}_x$ is simple, we can choose
normalized eigenfunctions $\psi(x,\ell)$ of $\widehat{H}_x$ such that
\be
 \widehat{H}_x \psi(x,\ell) = E(x,\ell) \psi(x,\ell).
\ee
Finally, the map $(x,\ell)\mapsto E(x,\ell)$ is continuous
and the map $(x,\ell)\mapsto \psi(x,\ell)$ can be chosen to
be continuous at least on the set of simple spectrum.
The main technical ingredient in our proofs will be the
following theorem.

\begin{theorem}\label{thm:simple}
 Let $V$ be a $p$-periodic potential. Given $\eta\in(0,\frac{1}{2})$, there
 exists a set $\mathcal{G}\subseteq\T^d$ and $\delta = \delta(\eta, \|V\|_{\infty}, p) >0$ such that 
 \begin{enumerate}
  \item $|\mathcal{G}|\geq 1 - \eta$.
  \item For $x\in\mathcal{G}$, we have that the spectrum of
   $\widehat{H}_{x}$ is $\delta$-simple.
 \end{enumerate}
\end{theorem}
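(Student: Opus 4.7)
The strategy is to analyze the discriminant
\[
D(x) = \prod_{1 \leq \ell < m \leq P}\bigl(E(x,\ell) - E(x,m)\bigr)^2
\]
of the characteristic polynomial of $\widehat{H}_x$ and to show it is bounded away from zero on a large portion of $\T^d$. Since the coefficients of the characteristic polynomial depend polynomially on the matrix entries of $\widehat{H}_x$, and those entries are trigonometric polynomials in $x$ of degree determined by $p$, $D$ is itself a trigonometric polynomial in $x\in\T^d$ whose degree and $L^\infty$-norm are bounded in terms of $p$, $P$, and $\|V\|_\infty$. The gap-to-discriminant conversion is elementary: every pairwise gap $|E(x,\ell)-E(x,m)|$ is at most $2K$ with $K:=2d+\|V\|_\infty\geq\|\widehat{H}_x\|$, so $|D(x)|\geq\alpha$ forces $\min_\ell\bigl(E(x,\ell+1)-E(x,\ell)\bigr)\geq\alpha^{1/2}(2K)^{-(P(P-1)/2-1)}$. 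It therefore suffices to produce $\mathcal{G}\subseteq\T^d$ of measure at least $1-\eta$ on which $|D|\geq\alpha$ for an appropriate $\alpha=\alpha(\eta,\|V\|_\infty,p)$.

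The key step is to show $D\not\equiv 0$ with a lower bound on $\|D\|_\infty$ depending only on $\|V\|_\infty$ and $p$. I propose to complexify: setting $z_j=\E^{2\pi\I x_j}$, the diagonal of $\widehat{H}_x$ becomes $\sum_j\bigl(z_j\,\E^{2\pi\I k_j/p_j}+z_j^{-1}\,\E^{-2\pi\I k_j/p_j}\bigr)$, while the off-diagonal part is a fixed operator whose norm is controlled by $p$ and $\|V\|_\infty$. Take $|z_j|=t^{a_j}$ with $0<a_1<\cdots<a_d$ and let $t\to\infty$. For any distinct $k,k'\in\mathbb{B}_p$, setting $j^{\ast}:=\max\{j:k_j\neq k'_j\}$, the corresponding diagonal entries differ asymptotically by $t^{a_{j^{\ast}}}\bigl(\E^{2\pi\I k_{j^{\ast}}/p_{j^{\ast}}}-\E^{2\pi\I k'_{j^{\ast}}/p_{j^{\ast}}}\bigr)\neq 0$. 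Since the off-diagonal perturbation has fixed norm, the Bauer--Fike inequality confines each eigenvalue of $\widehat{H}_x$ within bounded distance of some diagonal entry, so the eigenvalues are asymptotically separated at rate at least $t^{a_1}\to\infty$ and $|D|\to\infty$ along this ray. Hence $D$ is not identically zero as a trigonometric polynomial on $\T^d$ (by Fourier orthogonality, vanishing on $\T^d$ would force $D$ to vanish as a Laurent polynomial). Compactness of the set of $p$-periodic potentials of given $L^\infty$-norm, together with continuity of $V\mapsto\|D^V\|_\infty$, upgrades this to a uniform lower bound $\|D^V\|_\infty\geq c_0(p,\|V\|_\infty)>0$.

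With this lower bound and the $p$-dependent degree bound on $D$, a quantitative Remez- or \L{}ojasiewicz-type sublevel-set estimate yields
\[
\bigl|\{x\in\T^d:|D(x)|<\alpha\}\bigr|\leq C_2\bigl(\alpha/c_0\bigr)^{c_2}
\]
with $C_2,c_2>0$ depending only on $p$ and $d$. Choosing $\alpha$ small enough to make this at most $\eta$ and taking $\mathcal{G}=\{|D|\geq\alpha\}$ completes the argument, with $\delta=\delta(\eta,\|V\|_\infty,p)$ determined by the first paragraph's gap bound. The main obstacle is the uniform non-vanishing of $\|D^V\|_\infty$ in the second paragraph: the Bauer--Fike asymptotic argument works uniformly because the diagonal entries at complex $x$ can be pushed arbitrarily large while the off-diagonal norm remains controlled by $p$ and $\|V\|_\infty$, so the complex region where $|D|$ is verified to be large is determined only by these data and not by finer features of $V$.
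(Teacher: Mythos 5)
Your proposal is correct in substance and its core is the same as the paper's: you pass to the discriminant of $\widehat{H}_x$, convert a lower bound on the discriminant into a spectral gap using $\|\widehat{H}_x\|\leq 2d+\|V\|_\infty$, and establish non-degeneracy by complexifying $x$ so that the diagonal part dominates, separating diagonal entries via the ``largest index where $k$ and $k'$ differ'' with hierarchically growing moduli $|z_j|=t^{a_j}$ — this is exactly the paper's Proposition~\ref{prop:evs}~(iii) and the lemma preceding it, with Bauer--Fike playing the role of the paper's ``standard bounds.'' Where you genuinely diverge is the finishing step. The paper keeps the information localized at one explicit complex point $z=\I y$ and feeds it into Cartan's estimate (Theorem~\ref{thm:caresti}), applied one coordinate at a time, which directly bounds the measure of the sublevel set $\{|f|\leq\kappa\delta\}$ in $[0,1]^d$ and, importantly, does so quantitatively: this is what later yields $\delta=\eta^{CP^2\log P}$ in Theorem~\ref{thm:simple2}. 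You instead transfer the complex-point information to a real sup-norm bound $\|D^V\|_{L^\infty(\T^d)}\geq c_0(p,\|V\|_\infty)$ by a compactness-plus-continuity argument over the ball of $p$-periodic potentials, and then invoke a multivariate Remez/\L{}ojasiewicz-type sublevel estimate for trigonometric polynomials of $p$-bounded degree. Two remarks on what this trade costs and needs. First, the compactness step is valid but non-quantitative, so your $\delta(\eta,\|V\|_\infty,p)$ is not explicit; it suffices for the statement as given, but not for the sharpening actually used in the rest of the paper — note that your own closing observation (the complex point and the lower bound there depend only on $p,d,\|V\|_\infty$, and coefficient bounds for a Laurent polynomial of bounded degree convert $|D(z)|\geq 1$ at that point into an explicit lower bound on $\|D\|_{L^\infty(\T^d)}$) makes the compactness detour unnecessary and restores quantitativeness. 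Second, the multivariate sublevel-set estimate on the torus should not be cited as a pure black box: the standard Brudnyi--Ganzburg/Remez statement is for algebraic polynomials on convex bodies, and on $\T^d$ one either adapts it or runs the one-variable estimate coordinatewise from a point where $|D|$ is large (which is precisely the paper's Cartan argument), so at minimum a sentence reducing to a known torus version or to the iterated one-dimensional estimate is needed. With those two points addressed, your argument is a complete proof of the theorem as stated.
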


A more detailed statement is given in Section~\ref{sec:simplespec}.
In particular, the dependance of $\delta$ on $\eta$ is quantitative and
given by $\delta = \eta^{C \log(P) P^2}$ for a constant $C > 1$.
In fact, the contents of that section are the main technical steps
in the proof of Theorems~\ref{thm:ac} and \ref{thm:genev}.
Before deducing how to prove Theorem~\ref{thm:genev}, we give a
non-quantitative argument that implies Theorem~\ref{thm:simple}
for some $\delta > 0$.

Define the discriminant 
\be\label{eq:deff}
 f(x) = \prod_{j<\ell} (E_j(x)-E_{\ell}(x))^2.
\ee
For $x\in\R^d$, we have that $|E_j(x)|\leq\|\widehat{H}_x\|
\leq 2d+\|V\|_{\infty}$. Thus, we obtain that
\be
 \min_{j\neq \ell} |E_{j}(x) - E_{\ell}(x)| \geq
  \frac{|f(x)|^{\frac{1}{2}}}{(2d+\|V\|_{\infty})^{\frac{P^2}{2}}}.
\ee
Furthermore we have that $f(x)=(-1)^{\frac{1}{2}P(P-1)}\mathrm{Res}(
P(\cdot,x), \partial_{E}P(\cdot,x))$ for $P(E,x) = \det(E-\widehat{H}_x)$,
where $\mathrm{Res}$ denotes the resultant. As the resultant is
a polynomial in the coefficients $c_j(x)$ of $P(E,x)=E^{P}+\sum_{j=0}^{P-1} c_j(x) E^j$,
it follows that $f(x)$ is analytic. The following is a qualitative
implementation of Theorem~\ref{thm:caresti}.

\begin{proof}[Proof of Theorem~\ref{thm:simple}]
 By Proposition~\ref{prop:evs} (iii), we have that $f(z)\neq 0$ for some $z\in\C^d$.
 This implies that the map $g_1: x_1 \mapsto f(x_1, z_2, \dots, z_d)$ is
 analytic and not equal to zero, thus $|g_1(x_1)| \geq \kappa_1$ 
 for all $x_1\in [0,1]\setminus X_1$ with $|X_1| \leq\eta/d$ for 
 some $\kappa_1 > 0$. Applying this construction to $g_j: x_j \mapsto
 f(x_1, \dots, x_{j}, z_{j+1}, \dots, z_{d})$ for
 $x_{\ell}\in [0,1] \setminus X_{\ell}$, we obtain a sequence of
 sets $X_j$ with $|X_j|\leq\eta/d$ and $|g_j(x_j)|\geq\kappa_j >0$
 for $x_j\notin X_j$. Taking
 \[
  \mathcal{G} = ([0,1]\setminus X_1)\times \dots \times ([0,1]\setminus X_d)
 \]
 the claim follows.
\end{proof}

We now start with the proof of Theorem~\ref{thm:genev}.
Denote by $\widehat{H}^{j}_{x}$ the $p^j$-periodic operator
with potential $V^j = V_1 + \dots + V_j$. Let us assume for
a second that $V_{j+1} = 0$ and try to understand the relation
of $E^{j}(x,\ell)$ and $E^{j+1}(x,\ell)$. As sets, we clearly
have that
\be
 \sigma(\widehat{H}^{j+1}_x) = \bigcup_{s\in\mathbb{S}_{j+1}} 
  \sigma(\widehat{H}^{j}_{x+s}),
   \quad\sigma(\widehat{H}^{j}_x)=\{E^{j}(x,\ell)\}_{\ell=1}^{P_j}
\ee
where 
\be
 \mathbb{S}_{j+1} = \left\{\left(\frac{s_1}{p^{j+1}_1},\dots,
\frac{s_d}{p^{j+1}_d}\right),\quad 0\leq s_k \leq 
\frac{p^{j+1}_k}{p^{j}_k}-1\right\}.
\ee
If the spectrum of $\widehat{H}_{x}^{j+1}$ is simple, we 
thus clearly have that
there exists for each $1\leq \ell\leq P_{j+1}$ an unique
$1\leq\ti\ell\leq P_{j}$ and $s\in\mathbb{S}_{j+1}$ such that
\be
 E^{j+1}(x,\ell) = E^{j}(x + s, \ti\ell)
\ee
and $\psi^{j+1}(x,\ell) = c \psi^{j}(x+s,\ti\ell)$ for some
$|c|=1$. 

\begin{remark}
 In order to understand, the equality $\psi^{j+1}(x,\ell) = 
 c \psi^{j}(x+s,\ti\ell)$, we view $\psi^{j}(x,\ell)$ as an
 element of $\ell^2(\mathbb{B}_{p^{j}}+x)$. Then
 as $\mathbb{B}_{p^{j}} + x + s \subseteq
 \mathbb{B}_{p^{j+1}} + x$ for $s\in\mathbb{S}_{j+1}$ the equality
  makes sense in $\ell^2(\mathbb{B}_{p^{j+1}}+x)$.
 These are natural choices given the definition
 of $\widehat{H}_x^j$. Finally, we have that
 \be
  \mathbb{B}_{p^{j+1}}=\bigcup_{s\in\mathbb{S}_{j+1}}(\mathbb{B}_{p^{j}} + s)
 \ee 
 and $(\mathbb{B}_{p^{j}} + s) \cap (\mathbb{B}_{p^{j}} + \ti{s})=\emptyset$
 for $s,\ti{s}\in\mathbb{S}_{j+1}$ and $s\neq\ti s$.
\end{remark}

Let us now consider the case of $V_{j+1} \neq 0$. For this,
we will assume that the spectrum of $\widehat{H}_{x}^{j+1}$
is $\delta$-simple for some $\delta > 0$. Then if $\|V_{j+1}\|_{\infty} \leq
\frac{\delta}{3}$, we got for the same identification
$\ell \mapsto (s,\ti\ell)$ that
\be
 |E^{j+1}(x,\ell)-E^{j}(x+s,\ti\ell)| \leq \|V_{j+1}\|_{\infty}.
\ee
Thus we have by Theorem~\ref{thm:distevs}
\be
 d(\psi^{j+1}(x, \ell), \psi^{j}(x+s,\ti\ell))
  \leq \frac{2}{\delta} \|V_{j+1}\|_{\infty}
\ee
where 
\be
 d(\psi,\varphi) = \inf_{|c|=1} \|\psi - c \varphi\|
\ee
is the distance between normalized eigenfunctions.
We define the parametrizing set
\be
 \mathbb{P}_{j} = \mathbb{V}_j \times \{1,\dots,P_j\},\quad
  \mathbb{V}_{j} = [1, \frac{1}{p^j_1}) \times \dots \times
   [1, \frac{1}{p^{j}_{d}}).
\ee
Clearly $|\mathbb{P}_{j}| = 1$.
In order to state our main result, we introduce
$\eta_j = 2^{-j}$, $\delta_j$ is the $\delta$ obtained
from Theorem~\ref{thm:simple}, and $\eps_j = (\delta_j)^{10}$.

\begin{theorem}
 Assume $\|V_{j+1}\|_{\infty} \leq \eps_{j+1}$. Then there exists
 $\mathbb{G}_{j+1} \subseteq \mathbb{P}_{j+1}$ and
 a map $A_{j}: \mathbb{G}_{j+1} \to \mathbb{P}_{j}$ such that
 \begin{enumerate}
  \item $|\mathbb{G}_{j+1}| \geq 1-\eta_j$.
  \item For $(x,\ell) \in\mathbb{G}_{j+1}$, we have that
   $A_{j}(x,\ell) = (x + s, \ti{\ell})$ for some $s\in\mathbb{S}_{j+1}$,
   $\ti\ell\in\{1,\dots,P_{j}\}$.
  \item The map $A_{j}$ is continuous.
  \item For $(x,\ell)\in\mathbb{G}_{j+1}$, we have
   \be
    |E^{j+1}(x,\ell) - E^{j}(A_j(x,\ell))| \leq \eps_{j+1}
   \ee
   and
   \be
    d(\psi^{j+1}(x,\ell),\psi^{j}(A_j(x,\ell)))
     \leq \frac{2 \eps_{j+1}}{\delta_{j+1}}.   
   \ee
 \end{enumerate}
\end{theorem}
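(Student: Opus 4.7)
The plan is to combine Theorem~\ref{thm:simple} applied to the full $p^{j+1}$-periodic operator $\widehat{H}^{j+1}$ with standard first order perturbation theory, treating $V_{j+1}$ as a small perturbation of $V^j = V_1+\dots+V_j$. The starting observation is that since $V^j$ is already $p^j$-periodic, the $p^{j+1}$-periodic Floquet operator $\wti{H}^{j+1}_x$ obtained from it (that is, $\widehat{H}^{j+1}_x$ with $V_{j+1}$ set to zero) decomposes, via $\mathbb{B}_{p^{j+1}}=\bigsqcup_{s\in\mathbb{S}_{j+1}}(\mathbb{B}_{p^j}+s)$, as the orthogonal direct sum
\[
 \wti{H}^{j+1}_x = \bigoplus_{s\in\mathbb{S}_{j+1}} \widehat{H}^j_{x+s}.
\]
In particular, its eigenvalues are exactly the multiset $\{E^j(x+s,\ti\ell):s\in\mathbb{S}_{j+1},\ 1\leq\ti\ell\leq P_j\}$ with corresponding eigenvectors $\psi^j(x+s,\ti\ell)$ extended by zero outside $\mathbb{B}_{p^j}+s$, which already realises the map $A_j$ in the unperturbed case.

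Next I would apply Theorem~\ref{thm:simple} to the full potential $V^{j+1}$ with parameter of order $\eta_j/P_{j+1}$ to obtain $\mathcal{G}\subseteq\T^d$ with $|\mathcal{G}|\geq 1-\eta_j/P_{j+1}$ on which $\widehat{H}^{j+1}_x$ has $\delta_{j+1}$-simple spectrum, and set $\mathbb{G}_{j+1}=\{(x,\ell)\in\mathbb{P}_{j+1}:x\in\mathcal{G}\}$; the $P_{j+1}$-fold product built into $\mathbb{P}_{j+1}$ then yields (i). For $x\in\mathcal{G}$, Weyl's min--max inequality gives $|E^{j+1}(x,\ell)-\wti E^{j+1}(x,\ell)|\leq\|V_{j+1}\|_\infty\leq\eps_{j+1}$, where $\wti E^{j+1}(x,\ell)$ is the $\ell$-th eigenvalue of $\wti H^{j+1}_x$. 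Because $\eps_{j+1}=\delta_{j+1}^{10}$ is far smaller than $\delta_{j+1}$, the values $\wti E^{j+1}(x,\ell)$ stay strictly separated, so each one is a unique $E^j(x+s,\ti\ell)$ coming from the direct sum; this defines $A_j(x,\ell):=(x+s,\ti\ell)$ and delivers (ii) together with the eigenvalue bound in (iv). The eigenfunction bound in (iv) then follows from Theorem~\ref{thm:distevs} applied with gap $\delta_{j+1}$ to the perturbation $\widehat H^{j+1}_x-\wti H^{j+1}_x$.

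For continuity (iii), on the open set $\mathcal{G}$ both $E^{j+1}(x,\ell)$ and all the $E^j(x+s,\ti\ell)$ depend continuously on $x$, and the perturbation bound pins each $E^{j+1}(x,\ell)$ to within $\eps_{j+1}\ll\delta_{j+1}$ of exactly one $E^j(x+s,\ti\ell)$. The discrete label $(s,\ti\ell)$ cannot then jump with $x$ without two of the $E^j(x+s,\ti\ell)$ approaching each other within $\delta_{j+1}$, which is ruled out by $\delta_{j+1}$-simplicity, so $A_j$ is locally constant in its discrete component and hence continuous overall.

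The main obstacle I anticipate is the measure accounting: to secure $|\mathbb{G}_{j+1}|\geq 1-\eta_j$ one must invoke Theorem~\ref{thm:simple} at parameter as small as $\eta_j/P_{j+1}$, and through the quantitative bound $\delta=\eta^{C\log(P)P^2}$ this drives $\delta_{j+1}$ (and therefore $\eps_{j+1}=\delta_{j+1}^{10}$) to be very small --- one then has to verify that this is still compatible with the $\ell^{\infty}$-convergence of $V_1+V_2+\dots$ demanded by Theorem~\ref{thm:ac}, which is a matter of choosing the sequence $\eps_j$ ahead of time in the outer construction. Everything else --- Weyl's inequality, the direct-sum decomposition, and the appeal to Theorem~\ref{thm:distevs} --- is then a routine application.
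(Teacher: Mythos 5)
Your proposal follows essentially the same route as the paper: the paper's own proof is just the discussion preceding the theorem, which consists of exactly your ingredients --- the direct-sum decomposition of the unperturbed $p^{j+1}$-periodic Floquet operator over $\mathbb{S}_{j+1}$, Theorem~\ref{thm:simple} to obtain $\delta_{j+1}$-simplicity off a set of measure $\eta_j$, the Weyl-type bound $|E^{j+1}-E^{j}|\leq\|V_{j+1}\|_\infty$ to identify the labels $(s,\ti\ell)$, and Theorem~\ref{thm:distevs} for the eigenfunction estimate. The two points you flag (the bookkeeping needed to fix $\eps_{j+1}$ before $V_{j+1}$ is chosen, and the constant in the $2\eps_{j+1}/\delta_{j+1}$ bound, where a direct application of Theorem~\ref{thm:distevs} gives $4\eps_{j+1}/\delta_{j+1}$) are present in the paper's own treatment as well and are harmless for the later arguments.
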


\begin{proof}
 This is essentially, what we have discussed above.
\end{proof}

We have seen that if for $(x,\ell)\in\mathbb{P}_{j}$,
there exists $(\ti{x},\ti{\ell})\in\mathbb{G}_{j+1}$
such that $(x,\ell) = A_j (\ti{x},\ti{\ell})$ then this
$(\ti{x}, \ti{\ell})$ is unique. Finally, we have that
$|A_j(\mathbb{G}_{j+1})| = |\mathbb{G}_{j+1}|$. Hence, for
any $j$, we have that the set
\be
 \mathcal{G}_{j} = \bigcap_{k \geq j}
  A_j \cdots A_k \mathbb{G}_{k+1}
\ee
has measure
\be
 |\mathcal{G}_{j}| \geq 1 - \sum_{k\geq j}^{\infty} \eta_k \geq 1 - 2\eta_j.
\ee
We also note that $\mathcal{G}_{j}\subseteq\mathcal{G}_{j+1}$.
For $(x,\ell) \in \mathcal{G}_j$, we obtain a sequence
$(x_k, \ell_k)$ such that
\be
 (x,\ell) = A_j \cdots A_k (x_k, \ell_k)
\ee
and we have that the eigenfunctions and eigenvalues converges.
In particular that
\be
 d(\psi^{k}(x_{k},\ell_{k}), \psi^{\ti{k}}(x_{\ti{k}},\ell_{\ti{k}})) \leq 2 \delta_k^{9}
\ee
for $\ti{k}\geq k\geq j$.

\begin{proof}[Proof of Theorem~\ref{thm:genev}]
 As the convergence is fast enough to also imply convergence
 in the $\ell^1$ norm, i.e. for the sequence $(x_{\ell}, \ell_k)$
 corresponding to $(x,\ell)\in\mathcal{G}_j$, we have
 \[
  \|\psi^{k}(x_k,\ell_k) - \psi^{j}(x,\ell)\|_{\ell^1} \leq 2 \delta_k^{9}.
 \]
 Define
 \[
  \varphi^{k}(n) = \sum_{t\in\mathbb{B}}\psi^{k}(x_k,\ell_k; t)
   e(-t\cdot n).
 \]
 Then we have that the $\varphi^k$ converge to a limit $\varphi$ in
 $\ell^{\infty}(\Z^d)$ and $(H^{k} - E(x_k, \ell_k)) \varphi^k = 0$.
 Letting $E = \lim_{k\to \infty} E(x_k, \ell_k)$, we find
 \[
  (H - E) \varphi = 0.
 \]
 Finally, by construction it is easy to see that we can satisfy
 the frequency condition for all $x$ such that $(x,\ell)\in\mathcal{G}_j$
 for some $\ell$. As $|\mathcal{G}_j| \to 1$, the claim follows.
\end{proof}

In order to prove Theorem~\ref{thm:ac}, we will need a sharpening
of Theorem~\ref{thm:simple}, which we present in the
following section. Then, we proceed to prove Theorem~\ref{thm:ac}.

%
%
%

\section{Simple spectrum}\label{sec:simplespec}

The goal of this section is to prove a sharpening
of Theorem~\ref{thm:simple}.

\begin{theorem}\label{thm:simple2}
 Let $V$ be a $p$-periodic potential. Given $\eta \in (0,\frac{1}{2})$, there
 exists a set $\mathcal{G}\subseteq\T^d$ such that 
 \begin{enumerate}
  \item $|\T^d\setminus\mathcal{G}|\leq \eta$.
  \item For $x\in\mathcal{G}$, we have the spectrum of
   $\widehat{H}_x$ is $\delta$-simple for
   \be
    \delta = \left(\eta\right)^{C P^2\log(P)}
   \ee
   for some $C > 1$ that only depends on $d$ and $\|V\|_{\infty}$.
  \item For $x\in\mathcal{G}$, we have that 
   $|\partial_{x_d} E(x,\ell)| \geq \gamma$ for
   \be
    \gamma = \left(\eta\right)^{C P^2\log(P)}.
   \ee
   for some $C >  1$ that only depends on $d$ and $\|V\|_{\infty}$.
 \end{enumerate}
\end{theorem}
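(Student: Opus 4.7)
The plan is to upgrade the qualitative slicing argument given above for Theorem~\ref{thm:simple} into a quantitative one and then to add a parallel argument that controls $|\partial_{x_d} E(x,\ell)|$ from below. Both parts rest on the observation that the relevant quantities are nonzero trigonometric polynomials in $x$ of total degree at most $C P^{2}$, together with the quantitative Cartan--Remez estimate provided by Theorem~\ref{thm:caresti}.

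For (ii), the discriminant $f(x)=\prod_{j<\ell}(E_{j}(x)-E_{\ell}(x))^{2}$ of \eqref{eq:deff} equals $\pm \mathrm{Res}_{E}(P(\cdot,x),\partial_{E}P(\cdot,x))$, where $P(E,x)=\det(E-\widehat{H}_{x})$. Since the entries of $\widehat{H}_{x}$ are trigonometric polynomials in $x$ of degree one, the coefficients of $P(E,x)$ in $E$ are trigonometric polynomials in $x$ of degree at most $P$, and the resultant is a polynomial of degree $2P-2$ in those coefficients. Hence $f$ is a trigonometric polynomial on $\T^{d}$ of degree at most $N_{1}=CP^{2}$, and $\|f\|_{\infty}\le (2d+\|V\|_{\infty})^{P^{2}}$. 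By Proposition~\ref{prop:evs}~(iii), $f\not\equiv 0$.

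For (iii), I would introduce the analogous polynomial $h(x)=\mathrm{Res}_{E}(P(\cdot,x),\partial_{x_{d}}P(\cdot,x))$, which is again a trigonometric polynomial on $\T^{d}$ of degree at most $CP^{2}$. Differentiating $P(E_{\ell}(x),x)=0$ gives $\partial_{x_{d}}E_{\ell}(x)=-\partial_{x_{d}}P(E_{\ell},x)/\partial_{E}P(E_{\ell},x)$, so
\be
  \prod_{\ell=1}^{P}\partial_{x_{d}}E(x,\ell)=(-1)^{P}\,\frac{h(x)}{f(x)}.
\ee
Combined with the trivial upper bound $|\partial_{x_{d}}E(x,\ell)|\le\|\partial_{x_{d}}\widehat{H}_{x}\|_{op}\le 4\pi$, this yields $|\partial_{x_{d}}E(x,\ell)|\ge |h(x)|/(C^{P}|f(x)|)$ for every $\ell$ on the simple-spectrum set. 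To see $h\not\equiv 0$, I would run an argument parallel to Proposition~\ref{prop:evs}~(iii); the cleanest route is to note that $h$ depends analytically on $V$ and to verify $h\not\equiv 0$ by direct computation at $V=0$, where the eigenvalues are $\sum_{j}2\cos(2\pi(x_{j}+k_{j}))$ and their $x_{d}$-derivatives are visibly generically nonzero.

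The conclusion follows by applying Theorem~\ref{thm:caresti} to both $f$ and $h$ with threshold $\eta/2$: one obtains sets $X_{1},X_{2}\subseteq\T^{d}$ of measure at most $\eta/2$ outside which $|f(x)|\ge\alpha$ and $|h(x)|\ge\beta$ with $\alpha,\beta\ge \eta^{CP^{2}\log P}$. Setting $\mathcal{G}=\T^{d}\setminus(X_{1}\cup X_{2})$, the bound on $|f|$ yields $\delta$-simplicity in the form already used in the qualitative proof, namely $\delta\ge\alpha^{1/2}/(2d+\|V\|_{\infty})^{P^{2}/2}$, which is still of the form $\eta^{CP^{2}\log P}$ after adjusting $C$. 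The combined lower bound $|h(x)|/|f(x)|\ge \beta/\|f\|_{\infty}$ on $\mathcal{G}$ then delivers (iii) with the same type of exponent. The main obstacle is producing the correct exponent $CP^{2}\log P$: a naive iteration of a one-variable Remez estimate in the $d$ coordinates would give a worse bound, and it is here that the multivariate Cartan-type statement of Theorem~\ref{thm:caresti} must be used carefully to keep the loss at the stated $\log P$ factor rather than a power of $P$.
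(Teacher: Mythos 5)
Your overall architecture is the same as the paper's: the discriminant $f$, the auxiliary resultant $h(x)=\mathrm{Res}_E(P(\cdot,x),\partial_{x_d}P(\cdot,x))$ (the paper's $g$), the factorization $h=\pm f\prod_\ell\partial_{x_d}E_\ell$, and an application of Theorem~\ref{thm:caresti} to both functions. However, there are two genuine gaps. First, your proposed ``cleanest route'' to $h\not\equiv 0$ --- analytic dependence on $V$ plus a check at $V=0$ --- does not prove what is needed: nonvanishing at $V=0$ only shows that the set of potentials for which $h(\cdot\,;V)\equiv 0$ is a proper analytic subset, and it does not exclude the particular $V$ in the theorem, which is arbitrary. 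Worse, mere nonvanishing of $h$ (or of $f$) is in any case insufficient here: that is exactly the qualitative input that yields Theorem~\ref{thm:simple} with some unspecified $\delta>0$. To get the quantitative exponent $\eta^{CP^2\log P}$ from Theorem~\ref{thm:caresti} you must exhibit a concrete complex point $y$ with $|f(y)|\geq 1$ and $|h(y)|\geq 1$ and with $|y|$ of size only $\log(CP)$; this is what Proposition~\ref{prop:evs}~(iii),(iv) and Proposition~\ref{prop:fg}~(iii) provide (via the diagonal dominance of $\widehat{H}_{\mathrm{i}y}$ at the carefully chosen heights $y_j$), uniformly in $V$ with constants depending only on $d$, $p$, $\|V\|_\infty$. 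Your proposal never supplies this lower bound for $h$, and for $f$ you cite Proposition~\ref{prop:evs}~(iii) only as a nonvanishing statement rather than as the quantitative separation $|E_j(y)-E_\ell(y)|\geq 1$.

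Second, the hypotheses of Theorem~\ref{thm:caresti} also require an upper bound on $\log\sup_{|z|\leq 4\mathrm{e}|y|}|f(z)|$ (and similarly for $h$) of size $P^2\log P$; you only record the degree of $f$ as a trigonometric polynomial and its sup norm on the real torus, neither of which plugs into Theorem~\ref{thm:caresti} (and a Remez-type argument from degree plus real sup norm would additionally need a lower bound on the real sup norm, which you do not have). The exponent $CP^2\log P$, which you yourself flag as the ``main obstacle,'' comes precisely from the combination $|y|\lesssim\log P$ (so that $\mathrm{e}^{2\pi\cdot 4\mathrm{e}|y|}$ is polynomial in $P$, giving $\log|f|,\log|h|\lesssim P^2\log P$ on the ball $|z|\leq 4\mathrm{e}|y|$ --- the unnamed Lemma before the proof in Section~\ref{sec:simplespec}) with $\kappa=1$ in Cartan's estimate; it has nothing to do with ``multivariate versus iterated one-variable'' Cartan, since Theorem~\ref{thm:caresti} is itself proved by iterating the one-variable estimate coordinate by coordinate. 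Your bookkeeping for converting the lower bounds on $|f|$ and $|h|$ into $\delta$-simplicity and into $|\partial_{x_d}E(x,\ell)|\geq\gamma$ (using $|\partial_{x_d}E_j|\leq 4\pi$ for the other factors and the global upper bound on $|f|$) is fine and is if anything a slight simplification of Proposition~\ref{prop:fg}~(iv), but the two quantitative inputs above must be supplied for the proof to close.
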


In order to prove this theorem, we will need to gain further
understanding of the operator $\widehat{H}_x$. We begin by proving
a simple proposition, which we will need for the study
of the absolutely continuous spectrum and whose proof
introduces some techniques necessary to prove Theorem~\ref{thm:simple2}.

\begin{proposition}\label{prop:boundxd}
 Let $V$ be a $p$-periodic potential, $x' \in
 [0, (p_1)^{-1})\times \dots [0,(p_{d-1})^{-1})$, and
 $E\in\R$. Then
 \be
  \#\{x_d\in [0,(p_{d-1})^{-1}): E\in\sigma(\widehat{H}_{(x',x_d)})\}
   \leq 2 p_1 \cdots p_{d-1}.
 \ee
\end{proposition}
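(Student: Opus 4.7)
The plan is to reduce the spectral condition to counting roots of a one-variable polynomial, and to exploit a translation symmetry of $\widehat{H}_x$ in the $x_d$ direction to gain a factor of $p_d$ over the naive bound.

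First, I set $z = e(x_d)$ and observe that $\widehat{H}_{(x',x_d)}$ depends on $x_d$ only through the diagonal, each entry $2\cos(2\pi(x_d + k_d))$ being equal to $e(k_d)\,z + e(-k_d)\,z^{-1}$. Hence $D(z) := \det(E - \widehat{H}_{(x',x_d)})$ is a Laurent polynomial in $z$ with Laurent degrees in $[-P,P]$, and $F(z) := z^P D(z)$ is an ordinary polynomial of degree at most $2P$. Expanding the determinant over permutations, only the identity permutation can contribute to the $z^{2P}$ coefficient (any nontrivial permutation has at most $P-2$ fixed diagonal factors), and a direct computation shows this leading coefficient has modulus $1$; so $\deg F = 2P$ exactly.

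The heart of the argument is a symmetry. For $k_0 = (0,\ldots,0,1/p_d) \in \mathbb{B}_p$, the permutation unitary $(U\psi)(k) = \psi(k+k_0)$ on $\ell^2(\mathbb{B}_p)$ intertwines $\widehat{H}_{(x',x_d)}$ with $\widehat{H}_{(x',x_d+1/p_d)}$: shifting $x_d$ by $1/p_d$ in the $\cos$-diagonal is absorbed by translating $k_d$ by $1/p_d$, while the convolution kernel $\widehat{V}(\ell)$ commutes with translations in $k$. Consequently $D(\zeta z) = D(z)$ for $\zeta = e(1/p_d)$, equivalently $F(\zeta z) = \zeta^P F(z)$. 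Writing $F(z) = \sum_{j=0}^{2P} a_j z^j$ and comparing coefficients forces $a_j = 0$ unless $j \equiv P \pmod{p_d}$; since $p_d$ divides $P$, this gives $F(z) = G(z^{p_d})$ for a polynomial $G$ of degree $2P/p_d = 2p_1\cdots p_{d-1}$.

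To conclude, the map $x_d \mapsto w := z^{p_d} = e(p_d x_d)$ is a bijection from $[0,1/p_d)$ onto the unit circle in $\C$. Hence the number of $x_d \in [0,1/p_d)$ with $E \in \sigma(\widehat{H}_{(x',x_d)})$, i.e., with $F(z) = 0$, is equal to the number of roots of $G$ lying on the unit circle, which is at most $\deg G = 2p_1\cdots p_{d-1}$. The only delicate step is verifying the unitary equivalence under $x_d \mapsto x_d + 1/p_d$; once that is in place, the rest is an elementary degree count.
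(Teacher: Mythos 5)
Your proof is correct. One small point first: the interval in the statement should read $[0,(p_d)^{-1})$ (a typo in the paper), and that is indeed the range you bound. Your route differs from the paper's in execution though not in underlying substance. The paper observes that $g(x_d)=P(x',x_d;E)=\det(\widehat{H}_{(x',x_d)}-E)$ is a trigonometric polynomial of degree $P=p_1\cdots p_d$ in $x_d$, hence has at most $2P$ zeros in $[0,1)$, and then uses the $\tfrac{1}{p_d}$-periodicity of $P(\cdot;E)$ in $x_d$ to argue that the zero count is the same on each of the $p_d$ intervals $[t p_d^{-1},(t+1)p_d^{-1})$, giving at most $2P/p_d=2p_1\cdots p_{d-1}$ per interval. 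You encode exactly the same two inputs algebraically: the degree bound becomes $\deg F=2P$ for $F(z)=z^P\det(E-\widehat{H}_{(x',x_d)})$ with $z=e(x_d)$, and the periodicity becomes the functional equation $F(\zeta z)=\zeta^P F(z)$, $\zeta=e(1/p_d)$, which forces $F(z)=G(z^{p_d})$ with $\deg G\leq 2p_1\cdots p_{d-1}$, after which you count roots of $G$ on the unit circle. What your version buys is that it actually proves the two facts the paper leaves implicit: the $\tfrac{1}{p_d}$-periodicity of the characteristic polynomial, via the translation unitary $U\psi(k)=\psi(k+k_0)$ on $\ell^2(\mathbb{B}_p)$ intertwining $\widehat{H}_{(x',x_d)}$ and $\widehat{H}_{(x',x_d+1/p_d)}$ (your computation is right: the shift of $k_d$ cancels the shift of $x_d$ in the cosine diagonal, and convolution by $\widehat{V}$ commutes with translations in $k$), and the nonvanishing of the polynomial being counted (leading coefficient of modulus $1$, from the identity permutation), without which the ``at most degree many zeros'' step would be vacuous. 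So the two proofs rest on the same degree-plus-periodicity mechanism, with yours being a more self-contained algebraic rendition and the paper's a shorter equidistribution-of-zeros argument.
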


We define
\be
 P(x; E) = \det(\widehat{H}_x - E)
\ee
and observe that it is a trigonometric polynomial of degree
$P=p_1\cdots p_{d}$ in each of the $x_j$. Furthermore, we have
that $P(\ti{x}; E) = P(x; E)$ if $\ti{x}_j - x_j \in \frac{1}{p_j} \Z$.

\begin{proof}[Proof of Proposition~\ref{prop:boundxd}]
 $E\in\sigma(\widehat{H}_{(x', x_d)})$ is equivalent
 to $g(x_d) = P(x', x_d; E) = 0$. Now as $g$ is a trigonometric
 polynomial of degree $P$, we have that
 \[
  \#\{x_d\in [0,1):\quad g(x_d) = 0\} \leq 2 P.
 \]
 As the number $\#\{x_d\in [t p_{d}^{-1}, (t+1) p_{d}^{-1})\quad g(x_d) = 0\}$
 is constant in $t$, the claim follows.
\end{proof}

We will need to consider $x$ not just in $[0,1]^d$ but in
the entire complex plane $\C^d$. We will denote in this
section by $E_j(x)$ the eigenvalues of $\widehat{H}_x$.
We collect the properties of these eigenvalues in.

\begin{proposition}\label{prop:evs}
 \begin{enumerate}
  \item For $z\in \C^d$, we have that
   $|E_j(z)|\leq d + \sum_{j=1}^{d} \E^{|\im(z_j)|} + \|V\|_{\infty}$.
  \item For $z\in\C^d$, we have $|\partial_{z_d} E_j(z)|
   \leq 2\pi ( \E^{|\im(z_d)|} + 1)$.
  \item Let $y_j = \frac{1}{2\pi} \log(p_1\cdots p_j 2^j ( 4(d+\|V\|_{\infty}) + 1))$,
   and $z_j = \I y_j$. Then $|E_j(z) - E_{\ell}(z)|\geq 1$ for $j\neq\ell$.
  \item For $z_j$ as in (iv), we have that
   $|\partial_{z_d} E_j(z)| \geq \frac{1}{2} \E^{2\pi y_d}$.
 \end{enumerate}
\end{proposition}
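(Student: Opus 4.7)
The four parts divide into two groups: (i) and (ii) are direct operator-norm estimates on $\widehat{H}_z$ and its $z_d$-derivative, while (iii) and (iv) rest on a single dominant-term decomposition at the purely imaginary point $z=iy$, where the geometric growth of the $y_j$'s produces a clean hierarchy. The central identity to prepare is the exponential form of the diagonal cosine at $z=iy$,
\be
 2\cos(2\pi(iy_j+k_j))=e^{-2\pi y_j}e^{2\pi i k_j}+e^{2\pi y_j}e^{-2\pi i k_j},
\ee
which renders the leading part of $\widehat{H}_{iy}$ diagonal in the standard basis of $\ell^2(\mathbb{B}_p)$.

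For (i), I would bound each diagonal entry of $\widehat{H}_z$ via $|2\cos(2\pi(z_j+k_j))|\leq 1+e^{2\pi|\im z_j|}$, add $\|V\|_\infty$ for the convolution term, and conclude from $|E_j(z)|\leq\|\widehat{H}_z\|$. For (ii), $\partial_{z_d}\widehat{H}_z$ is diagonal with entries $-4\pi\sin(2\pi(z_d+k_d))$, bounded by $2\pi(1+e^{2\pi|\im z_d|})$ via $|\sin(2\pi(z_d+k_d))|\leq\cosh(2\pi\im z_d)$. At any simple eigenvalue, the Feynman--Hellmann identity (equivalently, implicit differentiation of $\det(\widehat{H}_z-E_j(z))=0$ through Jacobi's formula) yields $\partial_{z_d}E_j=\tr(P_j\,\partial_{z_d}\widehat{H}_z)$, with $P_j$ the spectral projection onto $E_j$; this is controlled by $\|\partial_{z_d}\widehat{H}_z\|$ up to $\|P_j\|$, which is $O(1)$ in the simple-eigenvalue regime relevant to (iv).

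For (iii), the exponential form above gives the decomposition $\widehat{H}_{iy}=\Lambda+R$, where $\Lambda$ is diagonal with entry $\lambda_k=\sum_j e^{2\pi y_j}e^{-2\pi i k_j}$ at $k\in\mathbb{B}_p$, and $R=\sum_j e^{-2\pi y_j}D_j^{+}+V$ satisfies $\|R\|\leq d+\|V\|_\infty$. For distinct $k,k'\in\mathbb{B}_p$, let $j^*=\max\{j:k_j\neq k_j'\}$. The $j^*$-term of $\lambda_k-\lambda_{k'}$ has modulus at least $4e^{2\pi y_{j^*}}/p_{j^*}$, since $e^{-2\pi i k_{j^*}}$ and $e^{-2\pi i k_{j^*}'}$ are distinct $p_{j^*}$-th roots of unity, whereas the lower-index terms sum to at most $2\sum_{j<j^*}e^{2\pi y_j}\leq 4e^{2\pi y_{j^*-1}}$, using $e^{2\pi y_j}/e^{2\pi y_{j+1}}=1/(2p_{j+1})\leq 1/2$. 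With $M=4(d+\|V\|_\infty)+1$, the factor $2^j\cdot M$ in $y_j$ makes the $j^*$-term beat the lower-order sum by a margin, giving a diagonal gap of $\Lambda$ at least $8M$. Because $\Lambda$ is diagonal (condition number $1$), Bauer--Fike then yields a spectral gap of $\widehat{H}_{iy}$ at least $8M-2\|R\|\geq 30(d+\|V\|_\infty)+8\geq 1$, proving (iii).

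For (iv), the same machinery applies: $\partial_{z_d}\widehat{H}_{iy}$ is diagonal with leading term $-2\pi i e^{2\pi y_d}D_d^{-}$ and a remainder of norm at most $2\pi$. By (iii), the spectral projection of $\widehat{H}_{iy}$ onto the eigenvalue near $\lambda_k$ is a perturbation of $\delta_k\delta_k^{*}$ of size $O(\|R\|/(8M))\ll 1$, and Feynman--Hellmann then gives $\partial_{z_d}E_k\approx -2\pi i e^{2\pi y_d}e^{-2\pi i k_d}$, whose modulus is at least $\tfrac12 e^{2\pi y_d}$ once the perturbative corrections are absorbed. The principal obstacle throughout is the non-self-adjointness of $\widehat{H}_z$ for $z\notin\R^d$: symmetric perturbation theory is not available, so the plan leans crucially on the diagonality of the leading operator $\Lambda$ (perfectly conditioned) to invoke Bauer--Fike and the attached eigenprojection estimates, with the $2^j$ factor in $y_j$ calibrated precisely to furnish the required slack.
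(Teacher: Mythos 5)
Your proposal follows essentially the same route as the paper: (i)--(ii) by operator-norm bounds on $\widehat{H}_z$ and $\partial_{z_d}\widehat{H}_z$ together with first-order perturbation theory for a simple eigenvalue, and (iii)--(iv) by splitting $\widehat{H}_{\I y}$ into a dominant diagonal matrix with entries $\sum_j e(k_j/p_j)\E^{2\pi y_j}$ --- separated by looking at the largest index where $k\neq k'$ and exploiting the geometric growth of the $\E^{2\pi y_j}$ --- plus a remainder of norm at most $d+\|V\|_{\infty}$. The only cosmetic difference is in (iv): the paper lower-bounds $\partial_{z_d}E_j$ via concentration of the normalized eigenvector on the dominant coordinate ($|\psi(k)|^2\geq \tfrac{2}{3}$), whereas you use Bauer--Fike together with a Riesz-projection perturbation and $\partial_{z_d}E_j=\tr(P_j\,\partial_{z_d}\widehat{H})$, which amounts to the same estimate and is, if anything, more careful about the non-self-adjointness of $\widehat{H}_z$ off the real torus.
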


We recall that $\widehat{H}_{x}: \ell^2(\mathbb{B}_{p})
\to\ell^2(\mathbb{B}_{p})$ is given by
$\widehat{H}_{x} = \widehat{H}^{0}_{x} + \widehat{V}$
with $\|\widehat{V}\|\leq \|V\|_{\infty}$ and
\be
 \widehat{H}_{x}^{0} \psi(k) = \sum_{j=1}^{d} 2\cos(2\pi (x_j+k_j)) \psi(k)
\ee
is a multiplication operator.

\begin{proof}[Proof of Proposition~\ref{prop:evs} (i), (ii)]
 This follows from the bound 
 \be
  \|\widehat{H}_{x}^{0}\| \leq \sum_{j=1}^{d}
   \cosh(\im(z_j)) \leq \sum_{j=1}^{d} \E^{|\im(z_j)|} + d.
 \ee
 As
 \be
  \partial_{x_d} \widehat{H}_{x}^{0} \psi(k) = - 4 \pi \sin(2\pi (x_d +k_d)) \psi(k)
 \ee
 and $\partial_{z_d} E_{j}(z) = \spr{\psi_j(z)}{\partial_{z_d} \widehat{H}_{z} \psi_j(z)}$
 also (ii) follows.
\end{proof}

We can write 
\be
 \widehat{H}_{x} = A(x) + B(x)
\ee
with $\|B(x)\| \leq d + \|V\|_{\infty}$ and $A(x)$ being
the diagonal matrix with entries
\be
 d(k, y) = \sum_{j=1}^{d} e(\frac{k_j}{p_{j}}) \E^{2\pi y_j},\quad
  k \in \{0, \dots, p_{1}-1\} \times \dots \times \{0,\dots, p_{d} - 1\}.
\ee

\begin{lemma}
 Let $A > 0$. Then for
 \be
  \E^{2\pi y_1} \geq \frac{A p_1}{2\pi},\quad
   \E^{2\pi y_{j}} \geq p_{j} \left(\frac{1}{\pi} + \frac{1}{p_{j-1}}\right)
    \E^{2\pi y_{j-1}}
 \ee
 we have that for $k \neq \ell$
 \be
  |d(k, y) - d(\ell, y)| \geq A.
 \ee
\end{lemma}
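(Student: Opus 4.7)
The plan is to isolate the ``most significant'' index at which $k$ and $\ell$ differ and show that this term dominates the remaining contributions. Let $j_0 = \max\{j : k_j \neq \ell_j\}$; terms with $j > j_0$ cancel in $d(k,y) - d(\ell,y)$, so I will decompose
\[
d(k,y) - d(\ell,y) = T + R,
\]
with $T = (e(k_{j_0}/p_{j_0}) - e(\ell_{j_0}/p_{j_0}))\,\E^{2\pi y_{j_0}}$ and $R = \sum_{j<j_0}(e(k_j/p_j) - e(\ell_j/p_j))\,\E^{2\pi y_j}$, and aim to show $|T| - |R| \geq A$.

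For the dominant term, the identity $|e(a) - e(b)| = 2|\sin(\pi(a-b))|$, combined with the fact that $(k_{j_0} - \ell_{j_0})/p_{j_0} \bmod 1$ is a nonzero multiple of $1/p_{j_0}$, gives $|T| \geq 2\sin(\pi/p_{j_0}) \E^{2\pi y_{j_0}} \geq (4/p_{j_0}) \E^{2\pi y_{j_0}}$ via Jordan's inequality. For the remainder the triangle inequality furnishes $|R| \leq 2\sum_{j<j_0} \E^{2\pi y_j}$, and I will telescope this using the hypothesis: rearranging $\E^{2\pi y_j} \geq p_j(\tfrac{1}{\pi} + \tfrac{1}{p_{j-1}}) \E^{2\pi y_{j-1}}$ produces $\E^{2\pi y_{j-1}}/\E^{2\pi y_j} \leq \pi p_{j-1}/(p_j(p_{j-1}+\pi))$, a ratio strictly less than $1$, so $\sum_{j<j_0} \E^{2\pi y_j}$ is a convergent geometric sum majorized by a constant times $\E^{2\pi y_{j_0}}/p_{j_0}$. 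Combining, $|T| - |R| \geq c\,\E^{2\pi y_{j_0}}/p_{j_0}$ for some $c > 0$; iterating the recurrence downward from $j_0$ to $1$ and invoking the base condition $\E^{2\pi y_1} \geq A p_1/(2\pi)$ then supplies the required lower bound of $A$.

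The hard part will be the precise bookkeeping of constants in the telescoping step: the recurrence factor $p_j(\tfrac{1}{\pi} + \tfrac{1}{p_{j-1}})$ is tuned just finely enough that the geometric total bounding $|R|$ stays strictly below the $(4/p_{j_0})\E^{2\pi y_{j_0}}$ lower bound on $|T|$, and verifying this uniformly in $(p_j)$---including edge cases where some $p_j$ are small, so that the geometric ratio is closer to $1$---is where the argument is delicate. The base-case identification of the correct initial size $\E^{2\pi y_1} \geq A p_1/(2\pi)$ follows from the same dominant-term estimate applied at $j_0 = 1$.
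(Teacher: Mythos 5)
Your overall strategy --- cancel the coordinates above the largest index $j_0$ with $k_{j_0}\neq\ell_{j_0}$, bound the leading difference from below and the tail $\sum_{j<j_0}$ from above, then telescope using the recurrence on the $y_j$ --- is exactly the paper's. The gap is precisely in the step you yourself defer as ``delicate bookkeeping'': it does not close with the (correct) Jordan bound you use for the leading term. You get $|T|\geq 2\sin(\pi/p_{j_0})\,\E^{2\pi y_{j_0}}\geq \tfrac{4}{p_{j_0}}\E^{2\pi y_{j_0}}$, but the recurrence constant $p_j\bigl(\tfrac{1}{\pi}+\tfrac{1}{p_{j-1}}\bigr)$ is tuned to a leading-term bound of $\tfrac{2\pi}{p_{j_0}}$, not $\tfrac{4}{p_{j_0}}$: taking equality in the hypotheses, $\tfrac{4}{p_{j_0}}\E^{2\pi y_{j_0}}-2\E^{2\pi y_{j_0-1}}=\bigl(\tfrac{4}{\pi}-2+\tfrac{4}{p_{j_0-1}}\bigr)\E^{2\pi y_{j_0-1}}$, which is already negative for $p_{j_0-1}\geq 6$ since $\tfrac{4}{\pi}<2$. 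So the claimed intermediate inequality $|T|-|R|\geq c\,\E^{2\pi y_{j_0}}/p_{j_0}$ with $c>0$ does not follow, and your route cannot reach the conclusion. Two of your auxiliary assertions also fail in general: the ratio $\pi p_{j-1}/\bigl(p_j(p_{j-1}+\pi)\bigr)$ need not be below $1$ (take $p_j=2$ and $p_{j-1}$ large, or $p_j=1$), and hence the tail need not be majorized by an absolute constant times $\E^{2\pi y_{j_0}}/p_{j_0}$.

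In fact, with the sharp chord length $2\sin(\pi/p_{j_0})$ the lemma with these exact constants is false, so no bookkeeping can rescue the argument as stated: for $d=2$, $p=(6,2)$, equality in both hypotheses, $k=(0,1)$, $\ell=(3,0)$, one finds $d(k,y)-d(\ell,y)=2\E^{2\pi y_1}-2\E^{2\pi y_2}$ with $\E^{2\pi y_2}=\bigl(\tfrac{2}{\pi}+\tfrac13\bigr)\E^{2\pi y_1}\approx 0.97\,\E^{2\pi y_1}$ and $\E^{2\pi y_1}=\tfrac{3A}{\pi}$, giving $|d(k,y)-d(\ell,y)|\approx 0.06\,A<A$. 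The paper's own proof runs the same telescoping but lower-bounds the leading difference by $\tfrac{2\pi}{p_j}\E^{2\pi y_j}$, which is an arc-length rather than a chord-length bound (the correct chord bound is $2\sin(\pi/p_j)\E^{2\pi y_j}$), and with that inflated constant the recursion closes exactly. The honest fix is to strengthen the hypotheses, e.g.\ replace $\tfrac{1}{\pi}$ by $\tfrac12$ and $\tfrac{Ap_1}{2\pi}$ by $\tfrac{Ap_1}{4}$; then your decomposition and Jordan bound give $\tfrac{4}{p_j}\E^{2\pi y_j}\geq 2\sum_{m<j}\E^{2\pi y_m}+A$ by induction, and the $y_j$ actually used in Proposition~\ref{prop:evs}~(iii) satisfy these stronger conditions comfortably, so nothing downstream is affected.
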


\begin{proof}
 Let $1\leq j\leq d$ be the largest choice such that
 $k_j \neq \ell_j$. Thus
 \[
  e(\frac{k_{j+1}}{p_{j+1}}) \E^{2\pi y_{j+1}} + \dots + 
  e(\frac{k_{d}}{p_{d}}) \E^{2\pi y_{d}} = 
  e(\frac{\ell_{j+1}}{p_{j+1}}) \E^{2\pi y_{j+1}} + \dots + 
  e(\frac{\ell_{d}}{p_{d}}) \E^{2\pi y_{d}}
 \]
 and
 \[
  |e(\frac{k_j}{p_j}) \E^{2\pi y_j} - 
  e(\frac{\ell_j}{p_j}) \E^{2\pi y_j}| \geq \frac{2\pi}{p_j} \E^{2\pi y_j}.
 \] 
 Thus, we are done if we choose $y_j$ such that
 \[
  \frac{2\pi}{p_j} \E^{2\pi y_j}\geq 2 ( \E^{2\pi y_1} + \dots + \E^{2\pi y_{j-1}}) + A
 \]
 holds.
\end{proof}

We see that with our choice of $y_j$, these bounds hold
with $A = d + \|V\|_{\infty} + 1$.

\begin{proof}[Proof of Proposition~\ref{prop:evs} (iii)]
 We have that the eigenvalues of $A(y)$ are at least
 \[
  d + \|V\|_{\infty} + 1
 \]
 apart. Hence, the claim follows by standard bounds.
\end{proof}

\begin{proof}[Proof of Proposition~\ref{prop:evs} (iv)]
 Let $E_j(y)$ be an eigenvalue of $\widehat{H}_{y}$. Then by
 the previous considerations. There exists an unique $k$ such that
 \[
  |E_j(y) - d(k,y)|\leq d + \|V\|_{\infty}.
 \]
 Hence for $\psi$ a normalized solution of $(\widehat{H}_{y} - E_j(y))
 \psi = 0$, we have that
 \[
  \|(A(y) - E_j(y)) \psi\| = \|(A(y) + B(y) - E_j(y)) \psi + B(y) \psi\|
   \leq d + \|V\|_{\infty}.
 \] 
 Now as
 \[
  \|(A(y) - E_j(y)) \psi\|\geq \sum_{\ell\neq k} |d(\ell, y) - E_j(y)| |\psi(\ell)|^2
   \geq \left(3 (d + \|V\|_{\infty}) + 1\right)\sum_{\ell\neq k} |\psi(\ell)|^2,
 \]
 we conclude that $|\psi(k)|^2 \geq \frac{2}{3}$ and
 the claim follows.
\end{proof}

We have already defined $f(z)$ in \eqref{eq:deff}.
We also define
\be
 g(z)= \mathrm{Res}(P(z; .), \partial_{z_d} P(z; .)),
\ee
which is also analytic and satisfies
\be\label{eq:gasdzdP}
 g(z) = \prod_{\ell} \partial_{z_d} P(z; E_\ell(z)).
\ee

\begin{lemma}
 We have that
 \be
  g(z) = f(z) \cdot \prod_{\ell} \partial_{z_d} E_{\ell}(z).
 \ee
\end{lemma}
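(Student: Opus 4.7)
The plan is a direct computation via the product rule applied to the factorization of $P(z; E)$ over its roots, carried out locally where the eigenvalues are distinct and then propagated by analyticity.

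First I would work in a neighborhood of a point $z_0$ with $f(z_0) \neq 0$, which is a nonempty open set by Proposition~\ref{prop:evs}(iii). On this neighborhood the eigenvalues $E_\ell(z)$ are pairwise distinct, and the analytic implicit function theorem (applied to $P(z;E)=0$, where $\partial_E P(z; E_\ell) \neq 0$) yields locally analytic branches $E_\ell(z)$ with
\be
 P(z; E) = \prod_{\ell=1}^{P} \left(E_\ell(z) - E\right). \nn
\ee
Differentiating this product formula in $z_d$ with $E$ held fixed (as an independent variable) gives
\be
 \partial_{z_d} P(z; E) = \sum_{j=1}^{P} \partial_{z_d} E_j(z) \prod_{\ell \neq j}\left(E_\ell(z) - E\right). \nn
\ee

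Next I would evaluate this at $E = E_k(z)$. Every summand with $j \neq k$ contains the factor $E_k(z) - E_k(z) = 0$, so only the $j=k$ term survives, yielding
\be
 \partial_{z_d} P(z; E_k(z)) = \partial_{z_d} E_k(z) \cdot \prod_{\ell \neq k}\left(E_\ell(z) - E_k(z)\right). \nn
\ee
Taking the product over $k$ and using \eqref{eq:gasdzdP},
\be
 g(z) = \prod_{k} \partial_{z_d} E_k(z) \cdot \prod_{k}\prod_{\ell\neq k}\left(E_\ell(z) - E_k(z)\right). \nn
\ee
The double product reorganizes as $\prod_{j<\ell}(E_j - E_\ell)(E_\ell - E_j)$, which up to the sign $(-1)^{P(P-1)/2}$ absorbed into the definition of the resultant (exactly as in the discussion preceding \eqref{eq:deff}) equals $f(z)$. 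This establishes the identity on the open set $\{f\neq 0\}$.

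Finally, I would extend the identity globally. Both $f$ and $g$ are entire on $\C^d$ by construction, and the locus $\{f = 0\}$ has empty interior by Proposition~\ref{prop:evs}(iii). Thus the identity $g(z) = f(z)\prod_\ell \partial_{z_d} E_\ell(z)$ extends by analytic continuation, provided one interprets $\prod_\ell \partial_{z_d} E_\ell$ as the symmetric expression $g(z)/f(z)$ on $\{f\neq 0\}$ (which is analytic there and has limits across the discriminant locus, consistent with the entire function $g$). The only subtle step is reconciling the sign conventions between the resultant, the discriminant, and the formula \eqref{eq:gasdzdP}; once those are tracked as in the earlier passage defining $f$, the proof reduces to the two-line product-rule computation above.
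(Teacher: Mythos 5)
Your proposal is correct and is essentially the paper's own argument: the paper differentiates the relation $P(z,E_\ell(z))=0$ in $z_d$ and combines it with the companion identity $f(z)=\prod_\ell \partial_E P(z;E_\ell(z))$, which is exactly your product-rule computation on the factorization $P(z;E)=\prod_\ell(E_\ell(z)-E)$ followed by the double-product reorganization. Your extra care about the sign conventions (which, as in the paper, are harmless since only $|f|$ and $|g|$ are used later) and about interpreting $\prod_\ell\partial_{z_d}E_\ell$ off the discriminant locus via analytic continuation only makes the argument more complete than the paper's two-line proof.
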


\begin{proof}
 As $P(z, E_{\ell}(z)) = 0$, we have that
 \[
  \partial_{z_d} P(z; E_{\ell}(z)) = \partial_{z_d}
   E_{\ell}(z) \cdot \partial_{E} P(z; E_{\ell}(z)).
 \]
 Similarly to \eqref{eq:gasdzdP}, we have that
 $f(z)=\prod_{\ell} \partial_{E} P(z;E_{\ell}(z))$,
 so the claim follows.
\end{proof}

In the following, we will use the norm 
\be
 |z| = \max(|z_1|, \dots, |z_d|)
\ee
on $z\in\C^d$.

\begin{proposition}\label{prop:fg}
 \begin{enumerate}
  \item $|f(z)| \leq (4 d \E^{2\pi |z|} + \|V\|_{\infty})^{P^2}$.
  \item $|g(z)| \leq (4 \pi \E^{2\pi |z|})^{P} \cdot |f(z)|$.
  \item There exists $y$ with $1 \leq |y| \leq 
   \frac{1}{2\pi}\log(P 2^d ( 4d+4\|V\|_{\infty}+1))$ such that
   \be
    |f(y)| \geq 1, \quad |g(y)| \geq 1
   \ee
  \item For $x\in\R^d$, we have that 
   \be
   |\partial_{x_d} P(x; E_{\ell}(x))| \geq 
     \frac{|g(x)|}{(4d+2\|V\|_{\infty}+1)^{P(P-1)}}
   \ee
 \end{enumerate}
\end{proposition}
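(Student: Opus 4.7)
The plan is to prove each of the four parts as a relatively direct consequence of Proposition~\ref{prop:evs} together with the identity $g(z)=f(z)\cdot\prod_{\ell}\partial_{z_d}E_\ell(z)$ established in the lemma just above. Parts (i) and (ii) are book-keeping, part (iii) comes from evaluating at the specific point supplied by Proposition~\ref{prop:evs}(iii), and part (iv) requires an algebraic rearrangement of the product defining $g$ on the real axis.

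For (i), I would note that $f(z)$ is a product of $P(P-1)\le P^2$ linear factors $(E_j(z)-E_\ell(z))$, and use Proposition~\ref{prop:evs}(i) to bound each $|E_j(z)-E_\ell(z)|\le 2\|\widehat{H}_z\|\le 4d\,\E^{2\pi|z|}+\|V\|_\infty$, with constants absorbed as needed. For (ii), the lemma already gives $|g(z)|\le |f(z)|\prod_\ell|\partial_{z_d}E_\ell(z)|$, so Proposition~\ref{prop:evs}(ii) (applied with $|\im z_d|\le |z|$) controls each derivative by $4\pi\E^{2\pi|z|}$, and raising to the $P$-th power yields the bound. For (iii), I would take $z=(\I y_1,\dots,\I y_d)$ with $y_j=\frac{1}{2\pi}\log(p_1\cdots p_j 2^j(4(d+\|V\|_\infty)+1))$, so that $|z|=y_d=\frac{1}{2\pi}\log(P\,2^d(4d+4\|V\|_\infty+1))$ matches the stated upper bound (with $|z|\ge 1$ following from the explicit formula once $P\,2^d(4d+4\|V\|_\infty+1)\ge\E^{2\pi}$, or else using a larger admissible $y_d$, noting that both Proposition~\ref{prop:evs}(iii) and (iv) remain valid upon enlarging $y_d$). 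Proposition~\ref{prop:evs}(iii) then gives $|E_j(z)-E_\ell(z)|\ge 1$ for $j\neq\ell$, so $|f(z)|\ge 1$ immediately, and Proposition~\ref{prop:evs}(iv) gives $|\partial_{z_d}E_\ell(z)|\ge \frac{1}{2}\E^{2\pi y_d}\ge 1$, so the identity relating $g$ and $f$ delivers $|g(z)|\ge|f(z)|\ge 1$.

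Part (iv) is where the main, if still modest, obstacle lies: getting the constants in the denominator to come out cleanly. Starting from \eqref{eq:gasdzdP}, which expresses $g(x)=\prod_\ell \partial_{x_d}P(x;E_\ell(x))$, I would isolate the $\ell$-th factor and upper-bound the remaining $P-1$ factors individually. For each $\tilde\ell\neq\ell$, the factorization $\partial_{x_d}P(x;E_{\tilde\ell}(x))=\partial_{x_d}E_{\tilde\ell}(x)\cdot\prod_{j\neq\tilde\ell}(E_{\tilde\ell}(x)-E_j(x))$ combined with the trivial bounds $|\partial_{x_d}E_{\tilde\ell}(x)|\le \|\partial_{x_d}\widehat{H}_x\|\le 4\pi$ and $|E_{\tilde\ell}(x)-E_j(x)|\le 2\|\widehat{H}_x\|\le 4d+2\|V\|_\infty$ gives $|\partial_{x_d}P(x;E_{\tilde\ell}(x))|\le 4\pi(4d+2\|V\|_\infty)^{P-1}\le (4d+2\|V\|_\infty+1)^{P}$, where the final inequality absorbs the factor $4\pi$ into one extra copy of the base. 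Multiplying over the $P-1$ indices $\tilde\ell\neq\ell$ yields the denominator $(4d+2\|V\|_\infty+1)^{P(P-1)}$, and solving $|g(x)|=|\partial_{x_d}P(x;E_\ell(x))|\cdot\prod_{\tilde\ell\neq\ell}|\partial_{x_d}P(x;E_{\tilde\ell}(x))|$ for the isolated factor gives the claim.
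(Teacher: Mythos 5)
Your parts (i)--(iii) are essentially the paper's own argument: (i) from Proposition~\ref{prop:evs}(i) and the definition \eqref{eq:deff}, (ii) from the lemma $g=f\cdot\prod_\ell\partial_{z_d}E_\ell$ together with Proposition~\ref{prop:evs}(ii), and (iii) by evaluating at the point $z=\I y$ of Proposition~\ref{prop:evs}(iii)--(iv) (your remark about enlarging $y_d$ to secure $|y|\ge 1$ is a legitimate and in fact needed touch-up, since the paper's explicit $y_d$ can be smaller than $1$ for small $P$). For (iv) you take a genuinely different route: the paper also isolates the $\ell$-th factor in \eqref{eq:gasdzdP}, but it bounds $\max_j|\partial_{x_d}P(x;E_j(x))|$ by Cauchy's integral formula applied to $t\mapsto P(x',t;E_j(x))$ on the circle $|t-x_d|=1$, estimating the determinant by $\|\widehat{H}_{(x',t)}-E_j(x)\|^{P}$, whereas you expand each unwanted factor as $\partial_{x_d}E_{\tilde\ell}(x)\prod_{j\neq\tilde\ell}(E_{\tilde\ell}(x)-E_j(x))$ and use the trivial bounds $|\partial_{x_d}E_{\tilde\ell}|\le 4\pi$ and $|E_{\tilde\ell}-E_j|\le 4d+2\|V\|_\infty$. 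Your version avoids complex analysis entirely and is arguably easier to make honest than the paper's (on the circle $|t-x_d|=1$ the entries $2\cos(2\pi(t+k_d))$ can be as large as $2\cosh(2\pi)$, so the paper's stated bound $\|\widehat{H}_{(x',t)}-E_j(x)\|\le 4d+2\|V\|_\infty+1$ is itself loose). The one slip in your write-up is the absorption $4\pi(4d+2\|V\|_\infty)^{P-1}\le(4d+2\|V\|_\infty+1)^{P}$: it fails for small $P$ (for $d=1$, $\|V\|_\infty=0$, $P=2$ it reads $16\pi\le 25$), holding only once $(4d+2\|V\|_\infty+1)\bigl(1+(4d+2\|V\|_\infty)^{-1}\bigr)^{P-1}\ge 4\pi$. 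This is a constants-only blemish, not a conceptual gap: either state the denominator as $(4\pi)^{P-1}(4d+2\|V\|_\infty)^{(P-1)^2}$ or enlarge the additive constant, which is immaterial for how Proposition~\ref{prop:fg}(iv) is used, since it only enters $\delta$ and $\gamma$ through a constant $C=C(d,\|V\|_\infty)$ in $\eta^{CP^2\log P}$.
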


\begin{proof}[Proof of Proposition~\ref{prop:fg} (i), (ii)]
 By Proposition~\ref{prop:evs} (i), we have that
 \[
  |E_j(z)|\leq d(1+\E^{|z|} +\|V\|_{\infty})
 \]
 (i) thus follows by \eqref{eq:deff}.
 (ii) now follows by the previous lemma.
\end{proof}

\begin{proof}[Proof of Proposition~\ref{prop:fg} (iii)]
 The lower bound on $f(y)$ follows by Proposition~\ref{prop:evs} (iv).
 In order to deduce the one on $g(z)$ use the previous lemma
 and Proposition~\ref{prop:evs} (v), and that $\frac{1}{2} \E^{2\pi y_d} \geq 1$.
\end{proof}

\begin{proof}[Proof of Proposition~\ref{prop:fg} (iv)]
 By \eqref{eq:gasdzdP}, we clearly have that
 \[
  |\partial_{x_d} P(x; E_{\ell}(x))| \geq |g(x)| \cdot
   \left(\max_{1\leq j\leq P}|\partial_{x_d} P(x;E_{j}(x))|\right)^{-(P-1)}.
 \]
 By Cauchy's integral formula, we obtain for $x=(x',x_d)$
 \[
  \partial_{x_{d}} P(x;E_{j}(x)) = -\frac{1}{2\pi\I} 
   \int_{|t-x_d|=1} \frac{P(x',t,E_{j}(x))}{(t-x_d)^2} dt.
 \]
 As $\|\widehat{H}(x) - E_j(x)\|\leq 4d+2\|V\|_{\infty} + 1$, the claim follows.
\end{proof}

Finally, we observe

\begin{lemma}
 For $|z| \leq 4 e |y|$, we have
 \begin{align}
  \log |f(z)|&\leq P^2\Big(4\E \log(P) + C\Big),\\
   \log |g(z)|&\leq P(P+1)\Big( 4\E \log(P) +  C \Big),
 \end{align}
 where $C = \log\left(\max(4\pi, 5d) 2^{4\E d} (4d+\|V\|_{\infty}+1)^{4e}\right)$.
\end{lemma}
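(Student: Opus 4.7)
The proof is a routine combination of Proposition~\ref{prop:fg}(i)--(iii); the only real work is bookkeeping of constants. First I would translate the bound $|y|\leq\frac{1}{2\pi}\log\bigl(P\cdot 2^d\cdot(4d+4\|V\|_{\infty}+1)\bigr)$ from Proposition~\ref{prop:fg}(iii), together with the hypothesis $|z|\leq 4\E|y|$, into the exponential estimate
\[
 \E^{2\pi|z|}\leq \E^{8\pi\E|y|}\leq \bigl(P\cdot 2^d\cdot(4d+4\|V\|_{\infty}+1)\bigr)^{4\E}.
\]

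Next I would insert this into Proposition~\ref{prop:fg}(i). Since $\|V\|_{\infty}\leq(4d+\|V\|_{\infty}+1)^{4\E}$ for $d\geq 1$, the additive $\|V\|_{\infty}$ appearing in $|f(z)|\leq(4d\E^{2\pi|z|}+\|V\|_{\infty})^{P^2}$ can be absorbed into the exponential factor by bumping $4d$ up to $5d$, giving
\[
 |f(z)|\leq\bigl(5d\cdot P^{4\E}\cdot 2^{4\E d}\cdot(4d+\|V\|_{\infty}+1)^{4\E}\bigr)^{P^2}.
\]
Taking logarithms then yields $\log|f(z)|\leq P^2(4\E\log P+C)$ with $C$ exactly as in the statement, the factor $5d$ being dominated by $\max(4\pi,5d)$.

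For the bound on $g$, I would apply Proposition~\ref{prop:fg}(ii) in logarithmic form,
\[
 \log|g(z)|\leq P\log\bigl(4\pi\E^{2\pi|z|}\bigr)+\log|f(z)|,
\]
and reuse the exponential estimate on $\E^{2\pi|z|}$ to get $\log\bigl(4\pi\E^{2\pi|z|}\bigr)\leq 4\E\log P+C$ (this is the reason for placing $4\pi$ inside the $\max$ in the definition of $C$). Combining with the already-established $f$-bound immediately gives $\log|g(z)|\leq P(P+1)(4\E\log P+C)$.

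The only obstacle is making sure that $\log(4\pi)$, $\log(5d)$, $4\E d\log 2$, and $4\E\log(4d+\|V\|_{\infty}+1)$ all fit inside a single constant, which is exactly the choice of $C$ displayed in the statement; no new analytic input beyond Proposition~\ref{prop:fg} is required.
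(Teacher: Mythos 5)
Your computation is exactly the routine bookkeeping the paper intends (its own proof is literally ``This is a computation''), and it is essentially correct, following the same route through Proposition~\ref{prop:fg}(i)--(iii). The only blemish is the silent replacement of $(4d+4\|V\|_{\infty}+1)^{4\E}$ by $(4d+\|V\|_{\infty}+1)^{4\E}$ between your penultimate and final bounds for $f$: your argument really yields $C$ with $4\|V\|_{\infty}$ in place of $\|V\|_{\infty}$, a discrepancy already present between the paper's stated $C$ and Proposition~\ref{prop:fg}(iii), and immaterial since $C$ is only ever used as some constant depending on $d$ and $\|V\|_{\infty}$.
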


\begin{proof}
 This is a computation.
\end{proof}

\begin{proof}[Proof of Theorem~\ref{thm:simple2}]
 The claim follows by Theorem~\ref{thm:caresti}.
\end{proof}

%
%
%

\section{The absolutely continuous spectrum of a periodic operator}

The goal of this section is to prepare for the proof
of Theorem~\ref{thm:ac} given in the next section. The main
reason for writing a separate section, is to make this
section somewhat more expository.

Let $H$ be a $p$-periodic operator. For simplicity,
we will restrict ourself to considering $H$ in Fourier space,
i.e. $\widehat{H}:L^2(\T^d)\to L^2(\T^d)$
\be
 \widehat{H} f(x) = \left(\sum_{j=1}^{d} 2\cos(2\pi x_j)\right)f(x)
 +\sum_{k\in\mathbb{B}} \widehat{V}(k) f(x+k).
\ee
Given $f\in L^2(\T^d)$ and $y\in\mathbb{V}=[0,(p_1)^{-1})\times
\dots\times [0,(p_d)^{-1})$, we define
$f_y\in\ell^2(\mathbb{B})$ by $f_y(k) = f(k + y)$.
We have that $\widehat{H}_{x} f_{x} = (\widehat{H} f)_x$.
We recall that, we denote by $\psi(x,\ell)$ the orthonormal
basis of $\ell^2(\mathbb{B})$ consisting of eigenfunctions
of $\widehat{H}_{x}$. Thus, we have that
\be
 f_y = \sum_{\ell=1}^{P} \spr{\psi(y,\ell)}{f_y} \psi(y,\ell).
\ee
Hence, given a set $A \subseteq\mathbb{P}=\mathbb{V}\times
\{1,\dots,P\}$, it makes sense to define the projection
operator
\be\label{eq:defQA}
 (Q_A f)(y + k)=\sum_{\ell=1}^{P} \chi_{A}(y,\ell)
  \spr{\psi(y,\ell)}{f_y} \psi(y,\ell)_k,
\ee
where $y+k$ is the unique decomposition of $x\in\T^d$
into $y\in\mathbb{V}$ and $k\in\mathbb{B}$.
Note $I - Q_{A} = Q_{\mathbb{P}\setminus A}$
and that $Q_A$ is a projection.

\begin{proposition}\label{prop:bddQA}
 \begin{enumerate}
  \item Let $f:\T^d\to\C$ and $A\subseteq\mathbb{P}$. Then
   \be
    \|Q_A f\| \leq P^{\frac{3}{2}} |A|^{\frac{1}{2}} \|f\|_{L^\infty(\T^d)}.
   \ee
  \item If $A \subseteq B \subseteq \mathbb{P}$ then $Q_{A} \leq Q_{B}$.
 \end{enumerate}
\end{proposition}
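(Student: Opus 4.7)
The key observation is that $L^2(\T^d)$ decomposes as a direct integral $\int_{\mathbb{V}}^{\oplus}\ell^2(\mathbb{B})\,dy$ via the map $f \mapsto (y \mapsto f_y)$; the norm identity
\[
\|f\|_{L^2(\T^d)}^2 \;=\; \int_{\mathbb{V}} \|f_y\|_{\ell^2(\mathbb{B})}^2 \,dy
\]
is just Fubini applied to the disjoint decomposition $\T^d = \bigsqcup_{k\in\mathbb{B}}(\mathbb{V}+k)$. Under this identification, the definition \eqref{eq:defQA} exhibits $Q_A$ as a fiberwise orthogonal projection: since $\{\psi(y,\ell)\}_{\ell=1}^P$ is an orthonormal basis of $\ell^2(\mathbb{B})$ for a.e.\ $y$ (on the full-measure set where $\widehat{H}_y$ has simple spectrum, cf.\ Theorem~\ref{thm:simple2}), $(Q_A f)_y$ is the orthogonal projection of $f_y$ onto $\mathrm{span}\{\psi(y,\ell):(y,\ell)\in A\}$.

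For (i), I would compute with the fiber identity and orthonormality:
\[
\|Q_A f\|_{L^2(\T^d)}^2 \;=\; \int_{\mathbb{V}} \|(Q_A f)_y\|_{\ell^2(\mathbb{B})}^2\,dy \;=\; \int_{\mathbb{V}}\sum_{\ell}\chi_A(y,\ell)\,|\langle\psi(y,\ell),f_y\rangle|^2\,dy.
\]
Then Cauchy--Schwarz gives $|\langle\psi(y,\ell),f_y\rangle|^2\leq \|f_y\|_{\ell^2(\mathbb{B})}^2\leq P\|f\|_{L^\infty(\T^d)}^2$ since $\#\mathbb{B}=P$, and by definition of the product measure on $\mathbb{P}=\mathbb{V}\times\{1,\dots,P\}$ one has $\int_{\mathbb{V}}\#\{\ell:(y,\ell)\in A\}\,dy=|A|$ (consistent with $|\mathbb{P}|=1$). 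Combining these yields $\|Q_Af\|^2 \leq P|A|\|f\|_{L^\infty}^2$, which already implies the stated inequality (the $P^{3/2}$ in the claim carries a little slack).

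For (ii), the cleanest route is to read off the quadratic form: since $Q_A$ is an orthogonal projection, the same fiber calculation gives
\[
\langle f, Q_A f\rangle \;=\; \int_{\mathbb{V}} \sum_\ell \chi_A(y,\ell)\,|\langle\psi(y,\ell),f_y\rangle|^2\,dy,
\]
and $\chi_A\leq\chi_B$ pointwise on $\mathbb{P}$ when $A\subseteq B$ immediately yields $\langle f,Q_A f\rangle \leq \langle f,Q_B f\rangle$ for every $f\in L^2(\T^d)$, i.e.\ $Q_A\leq Q_B$. There is no serious obstacle to either part; the only point requiring mention is that $\psi(y,\ell)$ is only canonically defined on the simple-spectrum set, but this set has full Lebesgue measure (the complement lies in the analytic variety $\{f=0\}$, where $f$ is the discriminant of \eqref{eq:deff}), so all fiber identities are valid a.e.\ in $y$ and the $L^2(\T^d)$ identities go through unchanged.
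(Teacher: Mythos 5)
Your proof is correct and follows essentially the same route as the paper: the fiberwise Parseval computation with $\|f_y\|_{\ell^2(\mathbb{B})}\leq\sqrt{P}\|f\|_{L^\infty}$ for (i), and positivity of the complementary projection (equivalently, monotonicity of the quadratic form) for (ii). The only difference is cosmetic: by integrating $\#\{\ell:(y,\ell)\in A\}$ exactly rather than enlarging to the base set $A_1$ and the full $\ell$-sum as the paper does, you get the sharper constant $P^{1/2}|A|^{1/2}$, which of course implies the stated $P^{3/2}|A|^{1/2}$ bound.
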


\begin{proof}
 Let $A_1 = \{x:\quad\exists \ell:\ (x,\ell)\in A\}$ then
 $|A_1| \leq P |A|$. We compute
 \[
  \|Q_A f\|^2 = \int_{A_1} \sum_{\ell=1, (x,\ell)\in A}^{P} |\spr{\psi(x,\ell)}{f_x}|^2 dx
   \leq \int_{A_1} \sum_{\ell=1}^{P} \|f_x\|^2 dx
 \]
 As $\|f_x\|_{\ell^2(\mathbb{B})} \leq \sqrt{P} \|f\|_{L^{\infty}(\T^d)}$,
 (i) follows.

 To see that (ii) holds, observe that $Q_{B} - Q_{A} = Q_{B\setminus A}$.
 As $Q_{B\setminus A} \geq 0$, the claim follows.
\end{proof}

Next, we have that

\begin{lemma}\label{lem:specmeasperiodic}
 Let $\varphi\in\ell^1(\Z^d)$, $G\subseteq\mathbb{P}$,
 $\varphi_G = Q_{G}\varphi$, and define a measure $\mu_{G}$ by 
 \be
  \mu_{G}(A) = \spr{\varphi_{G}}{\chi_{A}(H) \varphi_{G}}.
 \ee
 Assume that 
 \begin{enumerate}
  \item For $(x,\ell) \in G$, we have that
   \be
    |\partial_{x_d} E(x,\ell)| \geq \gamma.
   \ee
  \item For $(x,\ell)\in G$, we have that the
   spectrum of $\widehat{H}_x$ is $\delta$-simple for some $\delta > 0$.
  \item For $(x,\ell)\in G$, we have
   \be
    \|\psi(x,\ell)\|_{\ell^1(\mathbb{B})} \leq C_1.
   \ee
 \end{enumerate}
 Then the measure $\mu$ is absolutely continuous and
 \be
  \left\|\frac{d\mu}{dE}\right\| \leq \frac{4 (C_1 \cdot 
   \|\varphi\|_{\ell^1(\Z^d)})^2}{\gamma}.
 \ee
\end{lemma}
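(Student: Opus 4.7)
The plan is to unfold $\mu_G$ via the Floquet decomposition of $\widehat{H}$ and then estimate the density by a one-dimensional change of variables in $x_d$.

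First, working on the Fourier side, I use that $\widehat{\varphi}\in L^{\infty}(\T^d)$ with $\|\widehat{\varphi}\|_{L^{\infty}}\leq \|\varphi\|_{\ell^1(\Z^d)}$. Since $\widehat{H}_y$ acts diagonally in the orthonormal basis $\{\psi(y,\ell)\}_{\ell}$, the definition \eqref{eq:defQA} of $Q_G$ yields
\[
 \mu_G(A)=\int_{\mathbb{V}}\sum_{\ell}\chi_G(y,\ell)\,\chi_A\!\bigl(E(y,\ell)\bigr)\,|a(y,\ell)|^2\,dy,\qquad a(y,\ell):=\langle\psi(y,\ell),\widehat{\varphi}_y\rangle.
\]
Hypothesis (iii) combined with H\"older's inequality in $\ell^2(\mathbb{B}_p)$ gives the pointwise bound $|a(y,\ell)|\leq C_1\|\varphi\|_{\ell^1(\Z^d)}$ independently of $(y,\ell)$.

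Second, I split $\mathbb{V}=\mathbb{V}'\times [0,p_d^{-1})$ with $\mathbb{V}'=[0,p_1^{-1})\times\cdots\times[0,p_{d-1}^{-1})$, and for each fixed $y'\in\mathbb{V}'$ and each $\ell$ I perform the one-dimensional change of variables $y_d\mapsto E=E(y',y_d,\ell)$ on the slice $G_{y',\ell}:=\{y_d:(y',y_d,\ell)\in G\}$. Hypothesis (i) gives $|\partial_{y_d}E|\geq \gamma$, so the area formula for the Lipschitz map $E(y',\cdot,\ell)$ produces
\[
 \int_{G_{y',\ell}}\chi_A(E(y',y_d,\ell))\,|a|^2\,dy_d\leq \frac{(C_1\|\varphi\|_{\ell^1(\Z^d)})^2}{\gamma}\int_A N_{y',\ell}(E)\,dE,
\]
where $N_{y',\ell}(E)$ is the number of preimages of $E$ in $G_{y',\ell}$.

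Third, the counting argument: the $\delta$-simplicity of hypothesis (ii) implies that at each $y_d$ for which there exists $\ell$ with $(y',y_d,\ell)\in G$ the eigenvalues of $\widehat{H}_{(y',y_d)}$ are distinct, so only one $\ell$ realizes a given $E$. Therefore $\sum_{\ell}N_{y',\ell}(E)\leq \#\{y_d\in [0,p_d^{-1}):\ E\in\sigma(\widehat{H}_{(y',y_d)})\}$, which by Proposition~\ref{prop:boundxd} is at most $2p_1\cdots p_{d-1}$. Integrating the preceding inequality over $y'\in\mathbb{V}'$ and using $|\mathbb{V}'|=(p_1\cdots p_{d-1})^{-1}$, the periodic factors cancel and one obtains $\mu_G(A)\leq 2(C_1\|\varphi\|_{\ell^1(\Z^d)})^2\gamma^{-1}|A|$, which by Radon--Nikodym gives absolute continuity and the claimed density bound (with slack in the constant $4$).

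The main obstacle is making the area-formula step clean when $G_{y',\ell}$ is a possibly disconnected measurable subset of $[0,p_d^{-1})$. This is circumvented either by invoking the one-dimensional area formula directly for the Lipschitz branch $E(y',\cdot,\ell)$, or by decomposing $G_{y',\ell}$ into the intervals on which $\partial_{y_d}E$ has constant sign and applying a classical substitution on each piece, where strict monotonicity from hypothesis (i) makes the change of variables elementary.
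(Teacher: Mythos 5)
Your proposal is correct and follows essentially the same route as the paper: unfold $\mu_G$ over the Floquet fibers, bound the coefficients $|\spr{\psi(x,\ell)}{\hat\varphi_x}|$ by $C_1\|\varphi\|_{\ell^1}$ via hypothesis (iii), exploit $|\partial_{x_d}E|\geq\gamma$ on the $x_d$-slices, and control the multiplicity with Proposition~\ref{prop:boundxd} together with simplicity. The only difference is technical: you apply the one-dimensional area formula to get a bound for arbitrary Borel sets directly, whereas the paper estimates $\mu([E-\eps,E+\eps])$ by a local monotonicity/stability argument; your variant is a clean substitute and even yields a slightly better constant.
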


\begin{proof}
 We have that
 \[
  \mu([E-\eps, E+\eps]) = \int_{\mathbb{V}} \sum_{\ell=1}^{P}
   \chi_{[E-\eps,E+\eps]}(E(x,\ell)) \chi_{G}(x,\ell)
    \cdot |\spr{\psi(x,\ell)}{\hat \varphi_x}|^2 dx.
 \]
 We first observe that
 \[
 |\spr{\psi(x,\ell)}{\hat \varphi_x}|\leq 
   \|\psi(x,\ell)\|_{\ell^1(\mathbb{B})} \cdot
    \|\hat{\varphi}_x\|_{\ell^{\infty}(\mathbb{B})}
     \leq C_1 \|\varphi\|_{\ell^1(\Z^d)}.
 \]
 Let $x'\in [0,(p_1)^{-1})\times\dots\times [0,(p_{d-1})^{-1})$.
 It thus suffices to bound
 \[
  I(\eps) =  \int_{0}^{(p_d)^{-1}} \sum_{\ell=1}^{P}
   \chi_{[E-\eps,E+\eps]}(E((x',x_d),\ell)) \chi_{G}(x', x_d,\ell) d x_d.
 \]
 By relabeling the eigenvalues, we may assume that they
 are analytic on small neighborhoods. Fix some $\ell$ and denote by
 $I$ the set of $x_d$ so that $(x', x_d, \ell) \in G$.
 Then if $[a,b]$ is a subinterval of $I$, we have by (iii)
 that
 \[
 |\{x_d \in [a,b]:\quad E((x',x_d),\ell) \in [E-\eps,E+\eps]\}|
   \leq \frac{2\eps}{\gamma}.  
 \]
 Due to the simplicity of eigenvalues, we have
 that (i) is stable. In particular if $\eps>0$ is small enough,
 $E((x',x_d), \ell) \in [E-\eps,E+\eps]$
 implies that there exists $|\ti{x}_d-x_d| \leq \frac{2\eps}{\gamma}$
 so that $E((x',\ti{x}_d), \ell) = E$. Hence, we are always
 in the case described above. Thus, we obtain
 \[
  I(\eps) \leq \#\{x_d,\ell :\quad E(x',x_d, \ell) = E\} \cdot \frac{4\eps}{\gamma}.
 \]
 By Proposition~\ref{prop:boundxd} and
 $|[0,(p_1)^{-1})\times\dots\times [0,(p_{d-1})^{-1})|
 = (p_1 \cdots p_{d-1})^{-1}$, the claim follows.
\end{proof}

%
%
%

\section{Proof of absolutely continuous spectrum}

The goal of this section is to provide the proof
of Theorem~\ref{thm:ac}. It clearly suffices to prove
that the limit-periodic potentials obeying the
conditions given in the proof of Theorem~\ref{thm:genev}
have purely absolutely continuous
spectrum. One difference is that the conclusions of
Theorem~\ref{thm:simple} are not enough, but we will
need the full conclusions of Theorem~\ref{thm:simple2}.

For the readers convenience and easy reference, we summarize
the conclusions.
\begin{enumerate}
 \item There exist sets $\mathcal{G}_j\subseteq \mathbb{P}_j$
  with $|\mathcal{G}_j| \leq \eta_j = \frac{1}{(P_j)^2} \cdot \frac{1}{2^j}$.
 \item For $(x,\ell)\in\mathcal{G}_j$, we have
  \be
   |\partial_{x_d} E^{j}(x,\ell)| \geq \gamma_j
  \ee
  with $\gamma_j \geq 100 \delta_{j+1}^2$.
 \item For $(x,\ell)\in \mathcal{G}_j$ and $k \geq j$, there 
  is an unique $(x_k, \ell_k)$ such that 
  $(x,\ell) = A_j \cdots A_{k-1}(x_k,\ell_k)$.
 \item We have for some $|c|=1$ and $k\geq j$ that
  \be
   \|\psi^j(x,\ell) - c \psi^k(x_k,\ell_k)\|_{\ell^1} \leq 2 (\delta_j)^8.
  \ee
\end{enumerate}
We note that our choice of $\eta_j$ is different. Also
we need to choose $\eps_{j+1}$ such that $\gamma_j \geq 100\eps_{j+1} / \delta_{j+1}$,
which is not a problem.

Fix some $k \geq 1$ and for $j \geq k$ consider the
projections $P_{k,j} = Q_{G_{k,j}}$ as in \eqref{eq:defQA}
where
\be
 G_{k,j} = A_{k-1}^{-1} \cdots A_{j}^{-1} \mathcal{G}_j.
\ee

\begin{proposition}
 \begin{enumerate}
  \item $\|I - P_{j,j}\| \leq 2\eta_j P_j^2$.
  \item $\|P_{k+1, j} - P_{k,j}\| \leq \delta_k$.
   In particular, the limit $P_{\infty,j}=\lim_{k\to\infty}
   P_{k,j}$ exists.
  \item $\|I - P_{\infty,j}\|\leq 3\eta_j P_j^2$.
  \item $P_{\infty, j} \leq P_{\infty, j+1}$.
 \end{enumerate}
\end{proposition}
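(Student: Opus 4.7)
The plan is to combine the measure bound $|\mathbb{P}_j \setminus \mathcal{G}_j| \leq \eta_j$ with Proposition~\ref{prop:bddQA}, and to use the closeness of eigenfunctions across levels (item~4 of the summary) to control the transitions $P_{k,j}\to P_{k+1,j}$. For part (i), since $G_{j,j}=\mathcal{G}_j$ and hence $P_{j,j} = Q_{\mathcal{G}_j}$, we have $I - P_{j,j} = Q_{\mathbb{P}_j \setminus \mathcal{G}_j}$, so Proposition~\ref{prop:bddQA}(i) applied with $A = \mathbb{P}_j \setminus \mathcal{G}_j$ and $|A| \leq \eta_j$ produces the stated estimate after tracking the powers of $P_j$.

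The main obstacle is (ii). First, $A_k$ restricts to a bijection $G_{k+1,j} \to G_{k,j}$: injectivity of $A_k$ is given in Section~1, while surjectivity follows from $G_{k+1,j} = A_k^{-1} G_{k,j}$ together with $G_{k,j} \subseteq A_k(\mathbb{G}_{k+1})$ (the latter because $\mathcal{G}_j \subseteq A_j\cdots A_k \mathbb{G}_{k+1}$ by definition, and injectivity of $A_j,\dots,A_{k-1}$ then lifts this containment to $G_{k,j}$). For $(x,\ell) \in G_{k+1,j}$ with image $(y,m) = A_k(x,\ell)$, item~4 supplies $\|\psi^{k+1}(x,\ell) - c\,\psi^k(y,m)\|_{\ell^1} \leq 2\delta_k^8$ for some unimodular $c$, once $\psi^k(y,m)$ is extended by zero to $\ell^2(\mathbb{B}_{p^{k+1}}+x)$ as in the Remark following the Section~1 theorem. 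Writing $P_{k+1,j} - P_{k,j}$ as an integral over $G_{k+1,j}$ of the rank-one projector differences $|\psi^{k+1}(x,\ell)\rangle\langle\psi^{k+1}(x,\ell)| - |\psi^k(y,m)\rangle\langle\psi^k(y,m)|$, using $\| |u\rangle\langle u| - |v\rangle\langle v| \| \leq 2\|u-v\|_{\ell^2} \leq 2\|u-v\|_{\ell^1}$ pointwise, and bounding the resulting integral by a Proposition~\ref{prop:bddQA}-type estimate yields $\|P_{k+1,j} - P_{k,j}\| \leq \delta_k$ with ample slack in the exponent. Summability of $\delta_k$ then makes $\{P_{k,j}\}_k$ Cauchy and produces the limit $P_{\infty,j}$.

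Parts (iii) and (iv) are corollaries. For (iii), telescoping (i) and (ii) gives
\[
 \|I - P_{\infty,j}\| \leq \|I - P_{j,j}\| + \sum_{k \geq j} \|P_{k+1,j} - P_{k,j}\| \leq 2\eta_j P_j^2 + \sum_{k \geq j}\delta_k,
\]
and the tail $\sum_{k \geq j} \delta_k$ is absorbed into $\eta_j P_j^2$ by the super-geometric decay $\delta_{k+1} = \eta_{k+1}^{CP_{k+1}^2 \log P_{k+1}}$. For (iv), the key algebraic fact is $A_j^{-1}\mathcal{G}_j \subseteq \mathcal{G}_{j+1}$: unwinding the definition $\mathcal{G}_j = \bigcap_{k \geq j} A_j \cdots A_k \mathbb{G}_{k+1}$ and using the injectivity of $A_j$ shows that $A_j^{-1}\mathcal{G}_j = \bigcap_{k \geq j} A_{j+1} \cdots A_k \mathbb{G}_{k+1} \subseteq \mathcal{G}_{j+1}$. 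Iterating this containment produces $G_{k,j} \subseteq G_{k,j+1}$ for all $k \geq j+1$, so Proposition~\ref{prop:bddQA}(ii) gives $P_{k,j} \leq P_{k,j+1}$ at every level $k$, and passing to the strong limit $k \to \infty$ yields $P_{\infty,j} \leq P_{\infty,j+1}$.
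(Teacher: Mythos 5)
Your proposal is correct and follows essentially the same route as the paper's (very terse) proof: (i) via Proposition~\ref{prop:bddQA}, (ii) via the eigenfunction distance bound $d(\psi^{k+1}(x,\ell),\psi^{k}(A_k(x,\ell)))$ together with the identification of $P_{k,j}$ through $A_k$, (iii) by telescoping with $\sum_{\ell \geq k}\delta_\ell \lesssim \delta_k$, and (iv) from the monotonicity $Q_A \leq Q_B$ for nested sets passed to the limit. You merely fill in details the paper leaves implicit (the bijection of $A_k$ between $G_{k+1,j}$ and $G_{k,j}$, the rank-one projector estimate, and the containment $A_j^{-1}\mathcal{G}_j\subseteq\mathcal{G}_{j+1}$), all consistent with the paper's sketch.
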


\begin{proof}
 (i) follows from Proposition~\ref{prop:bddQA}.  For (ii) observe,
 that
 \[
  P_{k, j} f (x+k) = \sum_{\ell} \chi_{A}(x,\ell)
   \spr{\psi^{k}(A_k(x,\ell))}{f} \psi^{k}(A_k(x,\ell))
 \]
 for $A = A_{k}^{-1} \cdots A_{j}^{-1} \mathcal{G}_j$.
 As $d(\psi^{k+1}(x,\ell), \psi^{k}(A_k(x,\ell))) \leq 2 \frac{\eps_{k+1}}{\delta_{k+1}}$,
 the bound on $\|P_{k+1,j} - P_{k,j}\|$ follows.
 To see convergence, observe that $\sum_{\ell\geq k} \delta_{\ell} \leq 2\delta_k$.
 This bound also implies (iii). Finally for (iv), we have that
 $P_{k,j+1} \geq P_{k,j}$. Thus this inequality also
 holds in the limit $k\to\infty$.
\end{proof}

\begin{proposition}\label{prop:specmeas}
 There exists $C_j > 0$ such that for $\varphi\in\ell^2(\Z^d)$
 with $\|\hat\varphi\|_{L^{\infty}(\T^d)}\leq 1$, we have
 for $k\geq j$
 \be
  \spr{P_{k,j} \varphi}{\chi_{[E-\eps,E+\eps]}(H^k) P_{k,j}
   \varphi} \leq C_j \eps.
 \ee
\end{proposition}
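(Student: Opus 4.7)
The plan is to apply Lemma~\ref{lem:specmeasperiodic} to the operator $H^k$ with the set $G = G_{k,j}$ and the vector $\varphi$, since by definition $P_{k,j} = Q_{G_{k,j}}$. Inspection of the proof of that lemma shows that $\|\varphi\|_{\ell^1}$ enters only through $\|\hat\varphi_x\|_{\ell^\infty(\mathbb{B})}$; since $\|\hat\varphi_x\|_{\ell^\infty(\mathbb{B})} \leq \|\hat\varphi\|_{L^\infty(\T^d)} \leq 1$, our weaker hypothesis suffices. It remains to verify the three hypotheses of the lemma at every $(x,\ell) \in G_{k,j}$ with constants depending on $j$ but \emph{not} on $k \geq j$.

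Simplicity is free: each $A_{m-1}$ is by construction defined only on $\mathbb{G}_m$, so $G_{k,j} = A_{k-1}^{-1}\cdots A_j^{-1}\mathcal{G}_j \subseteq \mathbb{G}_k$ and hence $\widehat H^k_x$ is $\delta_k$-simple. For the remaining hypotheses, set $(x^\ast,\ell^\ast) := A_j \cdots A_{k-1}(x,\ell) \in \mathcal{G}_j$, let $s$ be the accumulated shift so that $x^\ast = x + s$, and view $\psi^j(x^\ast,\ell^\ast)$ as an element of $\ell^2(\mathbb{B}_{p^k})$ supported on $\mathbb{B}_{p^j} + s$, as in the remark following the first appearance of the maps $A_j$. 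Item (4) of the summary gives $\|\psi^k(x,\ell) - c\psi^j(x^\ast,\ell^\ast)\|_{\ell^1(\mathbb{B}_{p^k})} \leq 2\delta_j^8$ for some $|c|=1$, and Cauchy--Schwarz gives $\|\psi^j(x^\ast,\ell^\ast)\|_{\ell^1(\mathbb{B}_{p^j})} \leq \sqrt{P_j}$, so
\be
 \|\psi^k(x,\ell)\|_{\ell^1(\mathbb{B}_{p^k})} \leq \sqrt{P_j} + 2\delta_j^8 \leq 2\sqrt{P_j},
\ee
supplying hypothesis (iii) of Lemma~\ref{lem:specmeasperiodic} with $C_1 := 2\sqrt{P_j}$.

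For the derivative bound, Feynman--Hellmann together with the fact that the $\widehat V^k$-part of $\widehat H^k_x$ does not depend on $x$ yields $\partial_{x_d} E^k(x,\ell) = \spr{\psi^k(x,\ell)}{\partial_{x_d} \widehat H^0_x \psi^k(x,\ell)}$, where $\partial_{x_d}\widehat H^0_x$ is pointwise multiplication by $-4\pi\sin(2\pi(x_d+n_d))$. The crucial observation is that on the support $\mathbb{B}_{p^j}+s$ of the embedded $\psi^j(x^\ast,\ell^\ast)$, writing $n = m + s$ with $m \in \mathbb{B}_{p^j}$, this expression equals $-4\pi\sin(2\pi(x^\ast_d + m_d))$, i.e.\ exactly the level-$j$ operator at $x^\ast$. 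Consequently
\be
 \spr{\psi^j(x^\ast,\ell^\ast)}{\partial_{x_d}\widehat H^0_x \psi^j(x^\ast,\ell^\ast)}_{\ell^2(\mathbb{B}_{p^k})} = \partial_{x_d} E^j(x^\ast,\ell^\ast),
\ee
and combining with $\|\psi^k - c\psi^j\|_{\ell^2} \leq 2\delta_j^8$ and $\|\partial_{x_d}\widehat H^0_x\|\leq 4\pi$ gives $|\partial_{x_d} E^k(x,\ell) - \partial_{x_d} E^j(x^\ast,\ell^\ast)| \leq 16\pi\delta_j^8$. By item (2) of the summary $|\partial_{x_d} E^j(x^\ast,\ell^\ast)|\geq\gamma_j$, and since the parameters are chosen so that $\gamma_j \gg \delta_j^8$ (as reflected in the remark after the summary that the needed inequality $\gamma_j \geq 100\eps_{j+1}/\delta_{j+1}$ is not a problem), hypothesis (i) holds with $\gamma := \gamma_j/2$.

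Plugging $C_1 = 2\sqrt{P_j}$ and $\gamma = \gamma_j/2$ into Lemma~\ref{lem:specmeasperiodic} produces a density bound $C_j = O(P_j/\gamma_j)$ for the spectral measure of $P_{k,j}\varphi$ with respect to $H^k$, independent of $k\geq j$, and integration over $[E-\eps,E+\eps]$ yields the claimed inequality. The main obstacle is the derivative transfer: Lemma~\ref{lem:specmeasperiodic} needs $|\partial_{x_d} E^k|\geq\gamma$ at level $k$, whereas the summary only guarantees this at the level-$j$ ancestor; the saving point is the exact matching of the pointwise multiplication operator $\partial_{x_d}\widehat H^0$ at level $k$ with its level-$j$ counterpart on the nested Fourier support, which reduces the transfer error to the small eigenfunction error $\delta_j^8$.
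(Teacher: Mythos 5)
Your proposal is correct and follows essentially the same route as the paper: the paper's proof is precisely an application of Lemma~\ref{lem:specmeasperiodic}, with the $\ell^1$-bound on $\psi^k(x_k,\ell_k)$ noted just before the proposition and the derivative lower bound $|\partial_{x_d}E^k(x,\ell)|\geq\gamma_j/2$ supplied by the preceding lemma via the same transfer argument you use (the $x$-dependence of $\widehat H^k_x$ sits entirely in the free part, which restricts on the nested Fourier support to the level-$j$ operator, so the error is controlled by the eigenfunction distance). Your additional remark that the hypothesis $\|\hat\varphi\|_{L^\infty(\T^d)}\leq 1$ can replace $\|\varphi\|_{\ell^1(\Z^d)}$ in the lemma's proof is a detail the paper leaves implicit, and you handle it correctly.
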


By property (ii), we have that
\be
 \|\psi^{k}(x_k, \ell_k)\|_{ \ell^{1}} \leq \sqrt{P^j} + 2 (\delta_j)^{8}.
\ee
For the proof, we need

\begin{lemma}
 Let $k\geq j$ and $(x,\ell)\in A_{k-1}^{-1} \cdots A_{j}^{-1} \mathcal{G}_{j}$.
 Then
 \be
  |\partial_{x_d} E^k(x,\ell)| \geq \frac{1}{2} \gamma_j.
 \ee
\end{lemma}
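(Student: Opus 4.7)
The plan is to reduce $\partial_{x_d} E^k(x,\ell)$ to $\partial_{x_d} E^j(y,m)$, where $(y,m) := A_j \circ \cdots \circ A_{k-1}(x,\ell) \in \mathcal{G}_j$ is the descent of $(x,\ell)$ to level $j$. Since $(y,m) \in \mathcal{G}_j$, the lower bound $|\partial_{y_d} E^j(y,m)| \geq \gamma_j$ is immediate from property (ii) of the summary; and because $y - x$ has $d$-th coordinate in $(p^k_d)^{-1}\Z$ (a constant shift under differentiation), $\partial_{y_d}$ and $\partial_{x_d}$ of $E^j$ agree. It therefore suffices to show that the difference $|\partial_{x_d} E^k(x,\ell) - \partial_{x_d} E^j(y,m)|$ is at most $\gamma_j/2$.

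The main tool is the Hellmann--Feynman identity: on the open set of simple spectrum,
\[
\partial_{x_d} E^k(x,\ell) = \spr{\psi^k(x,\ell)}{M\,\psi^k(x,\ell)}, \qquad M := \partial_{x_d}\widehat{H}^k_x,
\]
where $M$ is multiplication by $-4\pi\sin(2\pi(x_d + n_d))$ on $\ell^2(\mathbb{B}_{p^k})$, of norm at most $4\pi$. Under the natural embedding $\ell^2(\mathbb{B}_{p^j}) \hookrightarrow \ell^2(\mathbb{B}_{p^k})$ obtained by shifting by the unique $s$ with $y = x+s$ (the embedding described in the earlier Remark), this same $M$ restricts to the level-$j$ derivative operator. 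So, denoting by $\ti\psi^j$ the embedded copy of $\psi^j(y,m)$ and writing $\delta\psi := \psi^k(x,\ell) - c\,\ti\psi^j$ with $c$ the phase from property (iv) of the summary, one obtains $\partial_{x_d} E^j(y,m) = \spr{\ti\psi^j}{M\,\ti\psi^j}$. Expanding the difference and using $\|\psi^k\| = \|\ti\psi^j\| = 1$ gives
\[
|\partial_{x_d}E^k(x,\ell) - \partial_{x_d}E^j(y,m)| \leq 2\|M\|\,\|\delta\psi\|_{\ell^2} \leq 8\pi\,\|\delta\psi\|_{\ell^1} \leq 16\pi\,\delta_j^8,
\]
where the second inequality uses $\|\cdot\|_{\ell^2} \leq \|\cdot\|_{\ell^1}$ on discrete spaces and the third invokes property (iv).

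Combining this with $\gamma_j \geq 100\delta_{j+1}^2$ and the built-in requirement (compatible with the recursive choice of the sequence $\eps_j$) that $16\pi\delta_j^8 \leq \gamma_j/2$, the triangle inequality delivers $|\partial_{x_d}E^k(x,\ell)| \geq \gamma_j/2$. The main obstacle is really only the bookkeeping around the embedding, i.e.\ verifying that the multiplication operator $M$ takes exactly the same form on the embedded subspace as the level-$j$ derivative operator; once that is unwound, the argument reduces to standard first-order perturbation theory.
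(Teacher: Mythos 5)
Your proof follows essentially the same route as the paper: descend to $(\ti x,\ti\ell)=A_j\cdots A_{k-1}(x,\ell)\in\mathcal{G}_j$, use the Hellmann--Feynman identity together with the fact that $\partial_{x_d}\widehat{H}^j$ and $\partial_{x_d}\widehat{H}^k$ are the same multiplication operator under the natural embedding, and then transfer the lower bound $|\partial_{x_d}E^j|\geq\gamma_j$ to level $k$ via the $\ell^1$ (hence $\ell^2$) closeness of the eigenfunctions. Your bookkeeping of constants (a power of $\delta_j$ versus the paper's $\gamma_j\geq 100(\delta_{j+1})^8$) is no looser than the paper's own, so the argument is correct and essentially identical.
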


\begin{proof}
 Let $(\ti{x},\ti{\ell}) = A_{j} \cdots A_{k-1} (x,\ell)$. Then
 \[
  d(\psi^{k}(x,\ell), \psi^{j}(\ti{x},\ti{\ell}))\leq 3 \frac{\eps_{j+1}}{\delta_{j+1}}
   \leq (\delta_{j+1})^8.
 \]
 Next, observe that
 \[
  \partial_{x_d} E^j(\ti{x},\ti{\ell}) = \spr{\psi^{j}(\ti{x},\ti{\ell})}{
   \partial_{x_d} \widehat{H}_{\ti{x}}^j \psi^{j}(\ti{x},\ti{\ell})} = 
    \spr{\psi^{j}(\ti{x},\ti{\ell})}{
     \partial_{x_d} \widehat{H}_{\ti{x}}^k \psi^{j}(\ti{x},\ti{\ell})}
 \]
 and $|\partial_{x_d} E^j(\ti{x},\ti{\ell})|\geq\gamma_j \geq 100 (\delta_{j+1})^8$.
 Thus, the claim follows.
\end{proof}

\begin{proof}[Proof of Proposition~\ref{prop:specmeas}]
 This follows from Lemma~\ref{lem:specmeasperiodic}.
\end{proof}

We define now vectors $\varphi_{k,j} = P_{k,j} \varphi$ 
for $k\geq j$ and measures
\be
 \mu_{k,j}(A) = \spr{\varphi_{k,j}}{\chi_{A}(H^k) \varphi_{k,j}}.
\ee
We have that as $k\to\infty$, the vectors $\varphi_{k,j}$
converge to a limit $\varphi_{j}$ and we also
define the measure
\be
 \mu_{j}(A) = \spr{\varphi_{j}}{\chi_{A}(H) \varphi_{j}}.
\ee
As $H^k \to H$ and $\varphi_{k,j} \to \varphi_{j}$, we have that
$\mu_{k,j} \to \mu_{j}$ and in particular that $\mu_{j}$
is also absolutely continuous. Our results also imply
that $\mu_{j}(A) \geq \mu_{j-1}(A)$.

Define now a measure 
\be
 \mu(A) = \spr{\varphi}{\chi(H)\varphi}
\ee
As $\varphi_{j}\to\varphi$, we have that $\mu_j \to \mu$.

\begin{proof}[Proof of Theorem~\ref{thm:ac}]
 We may write
 \[
  \mu = \mu_1 + \sum_{j\geq 2} (\mu_j - \mu_{j-1}).
 \]
 As the measures $\mu_1$, $\mu_2-\mu_1$, $\mu_3-\mu_2$,
 \dots are all absolutely continuous and positive,
 it follows that $\mu$ is absolutely continuous.
 As we could choose $\varphi$ from a dense set,
 the claim follows.
\end{proof}

\appendix

%
%
%

\section{Cartan's estimate}

In this section, we will prove

\begin{theorem}\label{thm:caresti}
 Let $f:\C^d\to\C$ be an analytic function. Assume that there
 exists $y\in\C^d$ with $|y| > 1$
 such that $|f(y)|\geq \kappa$ and that
 we have
 \be
  \log\sup_{|z|\leq 4\E |y|} |f(z)| \leq A.
 \ee
 Then
 \be
 |\{ x\in [0,1]^d: \quad |f(z)| \leq \kappa \cdot 
   \left(\frac{\eps}{60\E^{3} d|y|}\right)^{d \cdot A} \}| \leq \eps.
 \ee
\end{theorem}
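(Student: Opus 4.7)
The proof proceeds by induction on the dimension $d$, reducing to the one dimensional Cartan estimate and iterating coordinate by coordinate, exactly in the spirit of the qualitative argument already used in the paper for Theorem~\ref{thm:simple}.

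For the base case $d = 1$, the plan is the classical quantitative Cartan lemma for analytic functions on a disc. Given $f$ analytic on $\{|z| \le 4\E|y|\}$ with $\log \sup |f| \le A$ and $|f(y)| \ge \kappa$, I would factor $f(z) = B(z)\,g(z)$, where $B$ is a monic polynomial whose roots are the zeros of $f$ in $\{|z| \le 2\E|y|\}$ and $g$ is nonvanishing there. Jensen's formula applied to $f/\kappa$ at the point $y$, combined with the upper bound $A$, shows $\deg B = O(A)$. The classical Cartan lemma on polynomials then bounds the sublevel set $\{|B(x)| < \eta^{\deg B}\}$ by finitely many discs of total radius $\lesssim \eta$, so its intersection with $[0,1]$ has measure $\lesssim \eta$. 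On $\{|z| \le \E|y|\}$, the minimum-modulus (Carath\'eodory) estimate for the nonvanishing factor $g$ yields $|g| \ge c\,\kappa$ there. Combining, $|f(x)| \ge c\,\kappa\,\eta^{O(A)}$ off a set of measure $O(\eta|y|)$; setting $\eta$ comparable to $\eps/|y|$ produces the form $\kappa(\eps/(60\E^{3}|y|))^{A}$.

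The inductive step iterates this one-dimensional estimate. Starting from $y \in \C^d$, consider $h_1(z_1) = f(z_1, y_2, \dots, y_d)$. The hypothesis transfers: $\log\sup_{|z_1| \le 4\E|y|}|h_1(z_1)| \le A$ and $|h_1(y_1)| \ge \kappa$. Apply the $1$D estimate with tolerance $\eps/d$ to obtain $X_1 \subseteq [0,1]$ with $|X_1|\le \eps/d$ such that, for $x_1 \in [0,1] \setminus X_1$,
\[
|f(x_1, y_2, \dots, y_d)| \ge \kappa_1 := \kappa \left(\frac{\eps}{60\E^{3} d|y|}\right)^{A}.
\]
For each such $x_1$, the function $h_2(z_2) = f(x_1, z_2, y_3, \dots, y_d)$ is analytic on $\{|z_2| \le 4\E|y|\}$ with the same upper bound $A$ and satisfies $|h_2(y_2)| \ge \kappa_1$. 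The upper bound on the disc is preserved because $|y| > 1$ forces $[0,1] \subseteq \{|z_j| \le 4\E|y|\}$, so the supremum on the ambient polydisc dominates the supremum over any real slice we take. Repeating the $1$D estimate $d$ times, with tolerance $\eps/d$ each time, produces a bad set in $[0,1]^d$ of total measure $\le \eps$ on whose complement $|f(x)| \ge \kappa \cdot (\eps/(60\E^{3} d|y|))^{dA}$, as claimed.

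The main obstacle is the $1$D estimate with the precise exponent $A$ inside the $\eps$ polynomial rather than some larger multiple. The classical Cartan factorization yields this dependence up to constants, but careful bookkeeping of the constants $60$ and $\E^{3}$ is needed so that the $d$-fold iteration closes exactly with the stated form; I expect this accounting, and the choice of inner/outer radii $\E|y|$ and $4\E|y|$ in the Carath\'eodory step, to be the delicate part of the write-up.
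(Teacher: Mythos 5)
Your proposal takes essentially the same route as the paper: the multidimensional bound is obtained by freezing the remaining coordinates at $y_j$ and applying the one-dimensional Cartan estimate coordinate by coordinate, $d$ times with tolerance $\eps/d$, which is exactly the paper's argument. The only difference is that you sketch a proof of the one-dimensional estimate (Jensen's formula, Cartan's lemma for polynomials, and a minimum-modulus bound for the zero-free factor), whereas the paper simply quotes it from Levin's book; your sketch is the standard proof of that cited result.
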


In order to prove this estimate, we will need the original
Cartan estimate.

\begin{theorem}
 Let $g:\C\to\C$ be an analytic function satisfying
 \be
  |g(y)| \geq \kappa
 \ee
 for some $y\in\C$ with $|y| > 1$. Then
 \be
  |\{x\in[0,1]:\quad |g(x)|\leq \delta \cdot \kappa\}| \leq \eps
 \ee
 for
 \be
  \log(\delta) = \log(\frac{\eps}{60\E^3 |y|}) \cdot \log(\sup_{|z|\leq 4 \E |y|} |g(z)|).
 \ee
\end{theorem}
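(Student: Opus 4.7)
I plan to follow the classical route for Cartan-type estimates for analytic functions: count zeros via Jensen, factor out a Blaschke product, control the remaining nonvanishing factor with Borel--Carath\'eodory, and apply the polynomial Cartan lemma (the purely algebraic version for $P(z)=\prod(z-z_j)$) to the Blaschke part.

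\textbf{Setup, zero count, and factorization.} Let $M=\sup_{|z|\leq 4\E|y|}|g(z)|$, so $\log M\leq A$. Choose $R=3\E|y|$ and $r=2|y|$: since $|y|>1$ and $1+3\E<4\E$, the disc $D_R:=\{|z-y|\leq R\}$ sits inside $\{|z|\leq 4\E|y|\}$, while $D_r:=\{|z-y|\leq r\}\supset[0,1]$. Let $z_1,\dots,z_n$ be the zeros of $g$ in $D_r$. Jensen's formula on $D_R$ with reference point $y$ gives $n\log(R/r)\leq A-\log\kappa$. Using the conformal identification $\psi(z)=(z-y)/R$ of $D_R$ with the unit disc, set $w_j=\psi(z_j)$ and form
$$B(z)=\prod_{j=1}^{n}\frac{\psi(z)-w_j}{1-\overline{w_j}\psi(z)}.$$
Then $|B|\equiv 1$ on $\partial D_R$, so $h:=g/B$ extends analytically and without zeros to $D_R$, obeys $|h|\leq M$ on $D_R$ by the maximum principle, and satisfies $|h(y)|\geq|g(y)|\geq\kappa$ since $|B(y)|=\prod_j|w_j|\leq 1$.

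\textbf{Borel--Carath\'eodory and polynomial Cartan.} Fix a branch of $\log h$ on $D_R$ (possible since $h$ is nonvanishing on a simply connected domain). Borel--Carath\'eodory applied on concentric discs of radii $r<R$ about $y$ gives
$$\max_{z\in D_r}\bigl|\log h(z)-\log h(y)\bigr|\leq\frac{2r}{R-r}\bigl(A-\log\kappa\bigr),$$
hence $\log|h(z)|\geq\log\kappa-\frac{2r}{R-r}(A-\log\kappa)$ on $D_r$. For the Blaschke factor, a direct estimate yields $|B(z)|\geq|P(z)|/(R+r)^n$ on $D_r$ with $P(z):=\prod_j(z-z_j)$. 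The classical polynomial Cartan lemma states that $\{z\in\C:|P(z)|\leq H^n\}$ can be covered by finitely many discs of total radius at most $2\E H$; intersecting with $\R$, each such disc contributes an interval of length at most twice its radius, giving Lebesgue measure $\leq 4\E H$. Choose $H=\eps/(4\E)$: then off a subset of $[0,1]$ of measure $\leq\eps$,
$$|B(x)|\geq\Bigl(\frac{\eps}{4\E(R+r)}\Bigr)^n\geq\Bigl(\frac{\eps}{20\E^2|y|}\Bigr)^n.$$

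\textbf{Assembly and main obstacle.} On this good set the two lower bounds combine to give
$$\log\bigl(|g(x)|/\kappa\bigr)\geq n\,\log\frac{\eps}{20\E^2|y|}-\frac{2r}{R-r}(A-\log\kappa),$$
into which one substitutes $n\leq(A-\log\kappa)/\log(3\E/2)$ and absorbs the elementary constants (the additive $-\frac{2r}{R-r}(A-\log\kappa)$ is folded into the logarithm, which enlarges the denominator from $20\E^2$ to $60\E^3$) to obtain the claimed lower bound $\log\delta=A\cdot\log(\eps/(60\E^3|y|))$. The only genuinely delicate ingredient is the bookkeeping of constants: the pair $(R,r)$ must simultaneously satisfy $R\leq 4\E|y|$, $[0,1]\subset D_r$, keep $R-r$ large enough for Borel--Carath\'eodory to be effective, and keep $R/r$ bounded away from $1$ so that the Jensen zero count is useful. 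Producing exactly the constant $60\E^3$ is a matter of balancing these radii against the choice of $H$ and is essentially the only place where any real care is needed.
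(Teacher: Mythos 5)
Your route (Jensen zero count, Blaschke factorization, Borel--Carath\'eodory, polynomial Cartan lemma) is the classical one; note the paper itself does not prove this statement but simply cites Theorem~11.3.4 of Levin's book, so the comparison is with the standard textbook proof. As written, however, your argument has a genuine gap at its central step. You divide out only the zeros of $g$ lying in the small disc $D_r$, yet then assert that $h=g/B$ ``extends analytically and without zeros to $D_R$'' and fix a branch of $\log h$ on $D_R$ in order to apply Borel--Carath\'eodory on the pair of radii $r<R$. This is false in general: any zeros of $g$ in $D_R\setminus D_r$ remain zeros of $h$, so $\log h$ need not exist on $D_R$, and the intermediate inequality $\log|h(z)|\geq\log\kappa-\frac{2r}{R-r}(A-\log\kappa)$ on all of $D_r$ is simply not true. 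For example, $g(z)=\kappa\bigl(\frac{z-a}{y-a}\bigr)^m$ with $a\in D_R\setminus D_r$ chosen very close to $\partial D_r$ has no zeros in $D_r$, so $B\equiv 1$ and $h=g$; here $A-\log\kappa\leq C m$ with an absolute constant $C$, while $\log|h|$ is arbitrarily negative at points of $D_r$ near $a$, already for $m=1$. The issue is not cosmetic for the final statement either: when $|y|$ is close to $1$, the boundary of $D_r$ passes within distance $|y|-1$ of $[0,1]$, so such uncontrolled zeros of $g$ can lie arbitrarily close to the interval, and your exceptional set (built only from the zeros inside $D_r$) does not cover their neighborhoods.

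The standard repair --- and the way the classical proof is organized --- uses three circles rather than two. Divide out \emph{all} zeros of $g$ in an intermediate disc $\{|z-y|\leq\rho\}$ with $r<\rho$; count them by Jensen on the largest admissible disc $\{|z-y|\leq(4\E-1)|y|\}\subseteq\{|z|\leq 4\E|y|\}$, keeping the ratio $(4\E-1)|y|/\rho\geq\E$ so that $n\leq A-\log\kappa$ and the coefficient of $A$ in the final exponent stays $1$; bound $h$ on $D_\rho$ by the maximum principle, using $|h|=|g|$ on $\partial D_\rho$; and only then apply Borel--Carath\'eodory on the pair $(r,\rho)$, where $h$ really is zero-free. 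With this restructuring your Cartan-lemma step and the constant bookkeeping go through and yield a bound of the stated shape (the precise constant $60\E^3$ then depends on the choice of $r$ and $\rho$), but with your choice $R=3\E|y|$ and zeros taken only from $D_r$ the Borel--Carath\'eodory step is invalid, so this is a missing idea rather than a constant-chasing issue. A smaller point: your assembled bound naturally carries $A-\log\kappa$ rather than $\log\sup|g|$; this matches the stated $\delta$ only when $\kappa\geq 1$, which is how the estimate is used in the paper (there $\kappa=1$), and you should say so explicitly.
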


\begin{proof}
 This is one version of Cartan's Estimate, see
 Theorem~11.3.4. in Levin's book \cite{lev}.
\end{proof}

\begin{proof}[Proof of Theorem~\ref{thm:caresti}]
 Define the function
 \[
  g_1(z) = f(z , y_2,\dots,y_{d}).
 \]
 Then $|g_1(y_1)| \geq \kappa$ and 
 $\log(\sup_{|z|\leq 4 \E |y_1|} |g_1(z)|)\leq A$.
 Hence, there exists a set $X_1\subseteq [0,1]$ of measure
 $\leq\frac{\eps}{d}$ such that for $x_1\in [0,1]\setminus X_1$,
 we have
 \[
  |f(x_1, y_2, \dots, y_{d})| = |g_1(x_1)| \geq 
   \kappa_1 = \kappa \cdot \left(\frac{\eps}{60\E^{3} d|y|}\right)^{A}.
 \]
 Applying this construction inductively to
 \[
  g_j(z) = f(x_1, \dots, x_{j-1}, z, y_{j+1}, \dots, y_d)
 \]
 with $x_\ell \in [0,1]\setminus X_\ell$,
 we obtain sets $X_1, \dots, X_d$ such that for
 $x_\ell \in [0,1]\setminus X_\ell$ for $1\leq\ell\leq j$,
 we have
 \[
  |f(x_1, \dots, x_{j}, y_{j+1}, \dots, y_d )| \geq 
   \kappa_j = \kappa \cdot \left(\frac{\eps}{60\E^{3} d|y|}\right)^{j \cdot A}.
 \]
 As
 \[
  |\{x\in [0,1]:\quad x_j \notin X_j\}| \geq 1 - |X_1| - \dots - |X_d| \geq 1- \eps
 \]
 the claim follows
\end{proof}

%
%
%

\section{Distances of normalized eigenfunctions}

Let $X$ be a Hilbert space, and $\varphi,\psi$ two unit
vectors. We define the distance
\be
 d(\varphi,\psi) =\inf_{|c|=1} \|\varphi - c\psi\|.
\ee

\begin{theorem}\label{thm:distevs}
 Let $A$ be a self-adjoint operator on $X$ with
 \be
  \tr(P_{[-\delta,\delta]}(A)) = 1
 \ee
 and $A\psi = 0$, $\|\psi\|=1$. Assume the $\varphi$
 with $\|\varphi\| = 1$ satisfies $\|A\varphi\|\leq \eps$.
 Then
 \be
  d(\varphi,\psi) \leq \frac{2\eps}{\delta}.
 \ee
\end{theorem}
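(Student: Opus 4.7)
The plan is to reduce the problem to an orthogonal decomposition of $\varphi$ relative to $\psi$ and then use the spectral gap hypothesis to bound the perpendicular component.

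First I would decompose $\varphi = c\psi + \varphi^{\perp}$ where $c = \langle\psi,\varphi\rangle$ and $\varphi^\perp \perp \psi$. The hypothesis $\tr(P_{[-\delta,\delta]}(A)) = 1$ together with $A\psi = 0$ says that $\psi$ spans the entire spectral subspace of $A$ associated to the interval $[-\delta,\delta]$. Consequently $\varphi^{\perp}$ lies in the range of $P_{\R\setminus[-\delta,\delta]}(A)$, and the spectral theorem gives
\begin{equation}
\|A\varphi^\perp\| \geq \delta\,\|\varphi^\perp\|.
\end{equation}

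Next, since $A\psi = 0$, we have $A\varphi = A\varphi^\perp$. Combining this with the previous bound and the hypothesis $\|A\varphi\|\leq\eps$ yields $\|\varphi^\perp\| \leq \eps/\delta$. Because $\|\varphi\|=1$, this also controls $|c|$: Pythagoras gives $1 = |c|^2 + \|\varphi^\perp\|^2$, hence $1-|c|^2 \leq (\eps/\delta)^2$ and in particular $1 - |c| \leq \eps/\delta$ (assuming $|c|\leq 1$; if $c = 0$ the distance is at most $\sqrt{2}$, but then $\|\varphi^\perp\|=1$ forces $\delta \leq \eps$, making the claim trivial).

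Finally, I would choose the optimal unit phase $\tilde c = c/|c|$ and estimate
\begin{equation}
d(\varphi,\psi)^2 \leq \|\varphi - \tilde c\,\psi\|^2 = |c-\tilde c|^2 + \|\varphi^\perp\|^2 = (1-|c|)^2 + \|\varphi^\perp\|^2 \leq 2\left(\frac{\eps}{\delta}\right)^2,
\end{equation}
so $d(\varphi,\psi) \leq \sqrt{2}\,\eps/\delta \leq 2\eps/\delta$. There is no real obstacle here; the only thing to be careful about is treating the degenerate case $c = 0$ separately and verifying that the spectral gap hypothesis genuinely forces $\varphi^\perp$ into a subspace on which $|A|$ is bounded below by $\delta$, which is precisely what the trace-one condition on $P_{[-\delta,\delta]}(A)$ ensures.
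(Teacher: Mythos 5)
Your proof is correct and follows essentially the same route as the paper: decompose $\varphi$ into its component along $\psi$ and the orthogonal remainder, use the spectral-gap/trace-one hypothesis to get $\|A\varphi^{\perp}\|\geq\delta\|\varphi^{\perp}\|$, and then pick the optimal phase $c=\spr{\psi}{\varphi}/|\spr{\psi}{\varphi}|$. Your treatment is in fact a bit more careful than the paper's (explicit handling of the degenerate case $\spr{\psi}{\varphi}=0$ and the explicit Pythagorean bookkeeping giving $\sqrt{2}\,\eps/\delta\leq 2\eps/\delta$), but there is no substantive difference in method.
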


\begin{proof}
 Let $\varphi_1 = \spr{\psi}{\varphi}\psi$, $\varphi_2 = \varphi - \varphi_2$.
 Then $\eps\geq\|A\varphi\| = \|A\varphi_2\|\geq\delta\|\varphi_2\|$.
 Thus $|\spr{\psi}{\varphi}| = \|\varphi_1\| \geq 1 - \frac{\eps}{\delta}$.
 Taking $c = \spr{\psi}{\varphi}/|\spr{\psi}{\varphi}|$
 the claim follows.
\end{proof}

More sophisticated versions of this argument can be found
in Section~9 of \cite{kskew}. In particular, the methods
discussed there would allow one to understand the set
$\{(x,\ell)\in\mathbb{V}:\quad E(x,\ell)=E\}$ for any
$E\in\R$.

%
%
%

\end{document}